\documentclass[11pt]{article}

\addtolength{\textwidth}{30pt}
\addtolength{\textheight}{50pt}
\usepackage{pstricks}
\usepackage{amssymb}
\usepackage{epsfig}
\usepackage[mathscr]{eucal}
\usepackage{colortbl}
\usepackage{multirow}

\newcommand{\Natural}{\mathbb{N}}

\newcommand{\C}{\mathbb{C}}

\renewcommand{\epsilon}{\varepsilon}
\newcommand{\beq}{\begin{equation}}
\newcommand{\eeq}{\end{equation}}
\newcommand{\bea}{\begin{eqnarray}}
\newcommand{\eea}{\end{eqnarray}}
\newcommand{\bean}{\begin{eqnarray*}}
\newcommand{\eean}{\end{eqnarray*}}

\newtheorem{theorem}{Theorem}
\newtheorem{lemma}[theorem]{Lemma}

\newtheorem{proposition}[theorem]{Proposition}
\newtheorem{definition}[theorem]{Definition}

\newenvironment{proof}%
{\par\noindent{\em Proof.\ }}%
{\ \hfill ~\rule{2mm}{2mm}\par\medskip}
{\par\noindent{\bf Nota:\ }}%
{\ \hfill \par\medskip}
\newenvironment{remark}%
{\par\noindent{\bf Remark.\ }}%
{\ \hfill \par\medskip}

\def\qh{\mathcal{P}}

\def\qh{\mathscr{P}}
\def\QH{\mathcal{Q}}

\def\Opl{{\ell}}


%

\def\gx{g}

\def\zero{{\mathbf 0}}

\def\x{{\mathbf x}}
\def\X{{\mathbf X}}

\def\C{{\mathbf C}}
\def\D{{\mathbf D}}
\def\F{{\mathbf F}}
\def\G{{\mathbf G}}


\def\t{{\mathbf t}}

\def\co{{\mathrm{c}}}

\def\det{{\mathrm{det}}}

\def\dim{{\mathrm{dim}}}
\def\Cor{{\mathrm{Cor}}}
\def\Proy{{\mathrm{Proy}}}
\def\Ker{{\mathrm{Ker}}}
\def\Range{{\mathrm{Range}}}

\def\parent#1{\left( #1 \right)}

\def\llave#1{\left\{ #1 \right\}}

\def\fracp#1#2{{\textstyle{\frac{#1}{#2}}}}
\def\matriz#1{\parent{\begin{array} #1 \end{array} } }
\def\matriz22#1#2#3#4{\parent{\begin{array}{cc} #1&#2\\#3&#4 \end{array} } }

\def\0{{\bf 0}}
\setlength{\oddsidemargin}{0.25cm}
\setlength{\evensidemargin}{0.25cm}
\setlength{\textheight}{23cm}
\setlength{\textwidth}{15cm}
\setlength{\topmargin}{-1cm}

\begin{document}

\title{\bf The analytic integrability problem for perturbations of homogeneous quadratic Lotka-Volterra systems}

\author{Antonio Algaba, Crist\'obal  Garc\'{\i}a, Manuel Reyes \\
\small Dept.  Matem\'aticas, Facultad de Ciencias,  Univ. of Huelva, Spain. \\
\small {\rm e--mails:} {\tt algaba@uhu.es}, {\tt cristoba@uhu.es}, {\tt colume@uhu.es}\\
}
\date{}
\date{\today}

\maketitle

\abstract{We solve the analytic integrability problem for differential systems in the plane whose origin is an isolated singularity and the first homogeneous component is a quadratic Lotka-Volterra type. As an application, we give the analytically integrable systems of a class of systems $\dot{x}=x(P_1+P_2),\ \dot{y}=y(Q_1+Q_2),$ being $P_i,Q_i$ homogeneous polynomials  of degree $i$. } 

\section{Introduction and statement of the main result}
We focus on the study of the analytic integrability of a planar differential system
\bea  \label{sys1}
\dot{\x}= \F(\x)
\eea
where $\F$ is analytic  in a neighborhood of the origin.

Writing the Taylor expansion $\F=\F_n+\F_{n+1}+\cdots,\ \F_n\not\equiv 0,$ we notice that the condition of polynomial integrability of $\F_n,$ lowest degree homogeneous term of the vector field,   is a necessary condition in order to be $\F$ analytically integrable.

The analytically integrable differential systems with non-null linear part, i.e. $n=1,$ and $\F_1$ polynomially integrable, are orbitally linearizable. Indeed, it has the following cases in function the eigenvalues of $D(\F_1)(\0)$:
if $\lambda_1 \lambda_2 \ne 0,$ the origin is  either a saddle, or node or a non-degenerate monodromic  singular point (with complex eigenvalues). If $\lambda_1=0$ and $\lambda_2 \ne 0,$ the origin is a saddle-node.  Finally if $\lambda_1=\lambda_2=0,$ it has a nilpotent singular point.
The nodes and saddle-nodes are not analytically integrable. A non-degenerate monodromic point is analytically integrable if, and only if, it is orbitally equivalent to $(-y,x)^T,$ and a resonant saddle has an analytic first integral around the singular point if, and only if, it is orbitally equivalent to $(px,-qy)^T$ with $p,q\in \Natural,$ see \cite{HJ,Poincare2}.  The most studied systems whose origin is a resonant saddle are the Lotka-Volterra systems, see \cite{CGRS2012,CMR,CR,DGOR2013,GR,LCC,LCL,WH2014} and references therein.

Recently, it is proved that a nilpotent singular point is analytically integrable if, and only if, the vector field is orbital equivalent to its lowest degree quasi-homogeneous term, see \cite{AGG17}.


For vector fields with null linear part (a degenerate singular point) some partial results are known. 
Not any analytically integrable vector field with null linear part is orbitally equivalent to its first quasi-homogeneous component, see \cite{ACG0,ACG}.
The analytic integrability problem when the first quasi-homogeneous component of $\F$ is conservative whose Hamiltonian function $h$ has only simple factors is completely solved in \cite{AlgabaNonlinearity09}. In \cite{Algaba_Checa_Garcia_Gine2015} it is studied a particular case with $h$ having multiple factors.
\smallskip

In this work, we deal with perturbations of homogeneous quadratic Lotka-Volterra systems, $$\F=\F_2+\cdots,\ \ \F_2(x,y)=(xP_1(x,y),yQ_1(x,y))^T$$ with $P_1$ and $Q_1$ homogeneous polynomials of degree one (vector field with null linear part) and the origin is an isolated singular point of $\dot{\x}=\F_2(\x).$    

Here, we solve the analytic integrability problem for these systems. More specifically, we prove that, under the condition of polynomial integrability of $\F_2,$ the vector field $\F$ is analytically integrable if, and only if, it is orbitally equivalent to its lowest degree component. (Theorem \ref{main}).

As consequence, we characterize its analytic integrability through the existence of a Lie symmetry (Theorem \ref{main2}) and of an inverse integrating factor (Theorem \ref{main3}).

We emphasize that for the vector fields $\F=\F_3+\cdots, $ whose first homogeneous component is $\F_3(x,y)=(xP_2(x,y),yQ_2(x,y))^T,$ with $P_2$ and $Q_2$ homogeneous polynomials of degree two, the existence of an analytic first integral is not equivalent to the orbital equivalence of its lowest degree component. In fact, the vector field $(x(-3y^2-x^2),y(y^2+3x^2))^T+(y^4,0)^T$ is a perturbation of a cubic Lotka-Volterra type, analytically integrable since it is Hamiltonian. Nevertheless, it is not possible to transforms it into its lowest degree component. So, for $n\ge 3$, the problem is still open.

Finally, in Section \ref{sec:aplica}, we calculate the systems 
$$\left(\begin{array}{c}\dot{x}\\ \dot{y}\end{array}\right)=\left(\begin{array}{c}x(-x+3y)\\ y(3x-y)\end{array}\right)+\left(\begin{array}{c}x(a_{20}x^2+a_{11}xy+a_{02}y^2)\\ y(b_{20}x^2+b_{11}xy+b_{02}y^2)\end{array}\right)$$ with an analytic first integral at the origin.

\subsection{Invariant curves and first integrals of vector fields}

\smallskip

First we give the definition of invariant curve and its associated cofactor.

We deal with a vector field $\F=(P,Q)^T$ with $P,Q$ analytic at the origin and $P(\0)=Q(\0)=0.$ Throughout the paper, we will denote by $F$ the operator associated to the vector field $\F$, that is, $F:=P\partial_x+Q\partial_y.$  
We recall the concept of invariant curve and its associated cofactor.
\begin{definition} It is said that $C\in\C[[x,y]]$ (algebra of formal power series in $x,y$ over $\C$), with $C(\0)=0,$ is an invariant curve of the vector field $\F,$
if there exists $K\in \C[[x,y]],$ named cofactor of $C$, such that $F(C)
=KC.$ \\
Moreover, if $K\equiv 0,$ it is said that $\F$ is formally integrable and $C$ is a first integral of $\F$.  
\end{definition}
Let note that any formal function $C$ with $C(\0)\ne 0,$ satisfies $F(C)=KC$ with $K=F(C)/C\in\C[[x,y]].$\\

We will denote by $\qh_k$ the vector space of homogeneous scalar polynomials of degree $k,$ and by $\QH_{k}$ the vector space of polynomial homogeneous vector fields of degree $k$. We will use Taylor expansion of functions and vector fields without to consider questions of convergence. 
We note that analytic integrability is equivalent to formal integrability, see  Mattei \& Moussu \cite{Mattei80}.

Throughout the paper, we will be denoted by $\D=(x,y)^T\in
\mathcal{Q}_{1}$ (dissipative homogeneous vector
field) and by $\X_{h}=(-\partial h/\partial y,\partial
h/\partial x)^T$ (Hamiltonian vector field associated to the
polynomial $h$). \\
The following splitting of a homogeneous vector field plays a main role in our study.
\begin{proposition}{\cite[Prop.2.7]{AlgabaNonlinearity09}}  
Every $\F_k\in\mathcal{Q}_k$ can be uniquely written as
$\F_k=\X_{h}+\mu\D$
  with $ h:=\frac{1}{k+1}(\D\wedge\F_k)\in\mathscr{P}_{k+1}$ (product wedge
of both vector fields) and $
\mu:=\frac{1}{k+1}\mbox{div}(\F_k)\in\mathscr{P}_{k-1}$ (divergence of
$\F_k$).
\end{proposition}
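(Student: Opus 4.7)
The plan is to prove uniqueness/formulas and existence by exploiting two observations: $\D\wedge\D=0$ and $\diverg(\X_{h})=0$, combined with Euler's identity for homogeneous polynomials.

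First I would verify the formulas assuming a decomposition exists. Suppose $\F_{k}=\X_{h}+\mu\D$ with $h\in\qh_{k+1}$ and $\mu\in\qh_{k-1}$. Taking the wedge product with $\D$ gives
\[
\D\wedge\F_{k}=\D\wedge\X_{h}+\mu(\D\wedge\D)=\D\wedge\X_{h}
=x\,\partial_{x}h+y\,\partial_{y}h=(k+1)h,
\]
where the last equality is Euler's identity applied to $h\in\qh_{k+1}$. This proves the claimed formula for $h$. Similarly, taking divergence,
\[
\diverg(\F_{k})=\diverg(\X_{h})+\diverg(\mu\D)=0+x\,\partial_{x}\mu+y\,\partial_{y}\mu+2\mu=(k-1)\mu+2\mu=(k+1)\mu,
\]
again by Euler on $\mu\in\qh_{k-1}$. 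This simultaneously shows uniqueness and provides the formulas for $h$ and $\mu$.

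For existence I would start from the opposite end: given $\F_{k}\in\QH_{k}$, define $h:=\frac{1}{k+1}(\D\wedge\F_{k})$ and $\mu:=\frac{1}{k+1}\diverg(\F_{k})$ (which lie in $\qh_{k+1}$ and $\qh_{k-1}$ respectively), and set $\G:=\F_{k}-\X_{h}-\mu\D\in\QH_{k}$. The computation above gives $\D\wedge\G=0$, so writing $\G=(\tilde P,\tilde Q)^{T}$ we have $x\tilde Q=y\tilde P$; since $x,y$ are coprime in $\C[x,y]$, this forces $\tilde P=xg$, $\tilde Q=yg$ for some $g\in\qh_{k-1}$, i.e.\ $\G=g\D$. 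Applying divergence to $\G$ and using the same identities,
\[
\diverg(\G)=\diverg(\F_{k})-0-(k+1)\mu=0,
\]
whereas $\diverg(g\D)=(k+1)g$ by Euler. Hence $g\equiv 0$ and $\F_{k}=\X_{h}+\mu\D$.

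The verification is essentially a two-line computation once one knows to use $\D\wedge\D=0$, $\diverg(\X_{h})=0$, and Euler's identity; there is no real obstacle. The only mild point requiring care is the algebraic step ``$x\tilde Q=y\tilde P$ implies $\G$ is a multiple of $\D$''. This is immediate in $\C[x,y]$ by coprimality of $x$ and $y$, and the quotient $g$ automatically has degree $k-1$ by homogeneity, so the decomposition lives in the correct spaces $\qh_{k+1}\oplus\qh_{k-1}$.
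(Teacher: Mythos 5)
Your argument is correct. The paper does not prove this proposition at all --- it imports it verbatim from \cite[Prop.~2.7]{AlgabaNonlinearity09} --- so there is nothing internal to compare against, but your derivation is the standard one for this conservative--dissipative splitting: the identities $\D\wedge\D=0$, $\diverg(\X_h)=0$, and Euler's formula pin down $h$ and $\mu$ uniquely, and the coprimality of $x$ and $y$ (which forces the remainder $\G$ with $\D\wedge\G=0$ to be a scalar multiple of $\D$) closes the existence half. All the computations check out against the paper's conventions (e.g.\ $\D\wedge\F_k=xQ-yP$, consistent with the explicit $h=\frac{1}{3}xy((b_1-a_1)x+(b_2-a_2)y)$ appearing later), so the proof is complete and self-contained.
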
 

We give the Taylor expansion of a formal invariant curve of a formal vector field. 
\begin{proposition}\label{pro:primer} Consider $\F=\sum_{j\geq n}\F_j,\ \F_j\in\QH_j$ with $\F_n\not\equiv \0.$ Let $C$ a formal invariant cuve of $\F$ with cofactor $K$. Then, $C=\sum_{j\geq s}C_j,\ C_j\in\qh_j$ and $K=\sum_{j\geq n}K_j$, $K_j\in\qh_j,$ being the polynomial $C_s$ an invariant curve of  the polynomial vector field $\F_n$ with cofactor $K_n.$
\end{proposition}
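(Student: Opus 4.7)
My plan is to read off the required information from the defining identity $F(C)=KC$ by comparing its homogeneous components of equal degree on both sides.

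Since $C\in\C[[x,y]]$ is a nonzero formal series with $C(\0)=0$, its order $s\ge 1$ is well defined, so I would write $C=\sum_{j\ge s}C_j$ with $C_j\in\qh_j$ and $C_s\not\equiv 0$. The cofactor $K$ need not vanish at the origin, so a priori I expand $K=\sum_{j\ge 0}K_j$ with $K_j\in\qh_j$, leaving its minimum order to be determined.

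Next, I would substitute these expansions into $F(C)=KC$ and sort both sides by homogeneous degree. Because $\F_j\in\QH_j$ has components in $\qh_j$, the associated derivation $F_j$ sends $\qh_\ell$ into $\qh_{j+\ell-1}$, so $F(C)=\sum_{j\ge n,\,\ell\ge s}F_j(C_\ell)$ contains no homogeneous piece of degree below $n+s-1$. On the right, $KC$ contributes a piece $K_jC_\ell\in\qh_{j+\ell}$ for each pair $(j,\ell)$, and I would equate the homogeneous pieces on the two sides in each total degree.

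The main step is then to match the lowest-order pieces. Because $\C[x,y]$ is an integral domain and $C_s\not\equiv 0$, any nonzero $K_j$ of small index produces a nonzero summand $K_jC_s$ in $KC$ that cannot be cancelled by any other contribution (all other terms $K_{j'}C_{\ell'}$ live in strictly higher total degree). Comparing with the left-hand side therefore forces $K_j=0$ for all $j$ below the threshold fixed by the degree arithmetic, giving the expansion $K=\sum_{j\ge n}K_j$ stated in the proposition. Equating the lowest surviving homogeneous contributions then yields the identity $F_n(C_s)=K_nC_s$, which is exactly the statement that the polynomial $C_s$ is an invariant curve of the polynomial vector field $\F_n$ with cofactor $K_n$.

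The whole argument is essentially bookkeeping; the only mildly delicate point, and what I would highlight carefully, is the use of the integral-domain property of $\C[x,y]$ to prevent low-order components of $K$ from being hidden inside $KC$, which is what pins down the minimum order of the cofactor.
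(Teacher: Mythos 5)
Your argument is correct and is essentially the paper's own proof, which consists of the single instruction to compare the lowest-degree homogeneous terms of $F(C)-KC=0$; you have merely spelled out the bookkeeping, including the integral-domain step that pins down the order of $K$. One small point of care: since $F_n(C_s)\in\qh_{n+s-1}$ while $K_jC_s\in\qh_{j+s}$, the degree count actually forces $K_j=0$ only for $j<n-1$ and yields the lowest-degree identity $F_n(C_s)=K_{n-1}C_s$ if one insists on the literal grading $K_j\in\qh_j$; the threshold $j\ge n$ that you quote (and that the proposition states) is consistent only if $K_j$ is understood to lie in $\qh_{j-1}$, i.e.\ to be the component paired with $\F_j$ (as the cofactor of a homogeneous curve under $\F_n$ has degree $n-1$, cf.\ the proofs of Propositions \ref{pro:ini2} and \ref{pronilint}). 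This is an off-by-one in the indexing convention inherited from the statement, not a flaw in your argument.
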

\begin{proof} 
It is enough to consider the lowest degree homogeneous term of the equality $F(C)-KC=0$.
\end{proof}

The following two results show the invariant curves of a homogeneous vector field.
\begin{proposition}\label{pro:ini1}
Every homogeneous polynomial invariant curve of a homogeneous vector field $\F_n$ is given by $g_1^{n_1}g_2^{n_2}\dots g_m^{n_m}$ being each $g_j$ a polynomial invariant curve of $\F_n.$\\
Moreover, its cofactor is $n_1K_1+\cdots+n_mK_m,$ being $K_j$ the cofactor of $g_j.$ 
\end{proposition}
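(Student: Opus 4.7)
The plan is to exploit unique factorization in $\C[x,y]$ together with the Leibniz rule for the derivation $F_n=P_n\partial_x+Q_n\partial_y$.

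First I would factor a given homogeneous polynomial invariant curve $g$ into its distinct irreducible (homogeneous) factors: $g=g_1^{n_1}\cdots g_m^{n_m}$, where the $g_j$ are pairwise coprime and irreducible in $\C[x,y]$. Since $g$ is homogeneous, each $g_j$ can be taken homogeneous (over $\C$ one could even reduce to linear forms, but the coarser irreducible factorization is all the proof needs).

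Next, applying the Leibniz rule to the hypothesis $F_n(g)=Kg$ yields
\bea*
Kg_1^{n_1}\cdots g_m^{n_m}=F_n(g)=\sum_{i=1}^{m} n_i\,g_i^{n_i-1}\prod_{k\neq i} g_k^{n_k}\,F_n(g_i).
\eea*
Fix an index $i$. Every summand with $j\neq i$ is divisible by $g_i^{n_i}$ (because $\prod_{k\neq j}g_k^{n_k}$ contains the full power $g_i^{n_i}$), and the left-hand side is also divisible by $g_i^{n_i}$. Hence the remaining term, $n_i\,g_i^{n_i-1}\bigl(\prod_{k\neq i} g_k^{n_k}\bigr)F_n(g_i)$, must likewise be divisible by $g_i^{n_i}$. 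Since $g_i$ is irreducible and coprime to every $g_k$ with $k\neq i$, unique factorization in $\C[x,y]$ forces $g_i\mid F_n(g_i)$. That is, there exists $K_i\in\C[x,y]$ with $F_n(g_i)=K_i g_i$, so each factor $g_j$ is itself an invariant curve of $\F_n$ with cofactor $K_j$.

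Substituting this back into the displayed identity and cancelling $g$ gives $K=n_1K_1+\cdots+n_mK_m$, which is the claimed cofactor formula. The only step requiring any care is the divisibility argument based on coprimality of distinct irreducible factors; everything else is routine manipulation with the Leibniz rule. Homogeneity of $\F_n$ is only used to guarantee that the $g_j$ (and hence the $K_j$) can be chosen homogeneous, consistent with the grading on $\mathscr{P}$.
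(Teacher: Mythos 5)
Your proof is correct and rests on the same two ingredients as the paper's: the Leibniz rule for the derivation $F_n$ and irreducibility/unique factorization in $\mathbb{C}[x,y]$ to conclude $g_i\mid F_n(g_i)$. The only difference is organizational: the paper peels off one irreducible factor at a time and iterates (treating the case $g=g_1^2q$, then $g_1^3$, etc., to handle multiplicities), whereas you expand over the full factorization at once and observe that every other summand already carries the full power $g_i^{n_i}$ — a cleaner, one-shot version of the same argument that also yields the cofactor formula immediately by substitution.
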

\begin{proof}
We suppose that $g=g_1p,$ ($g_1$ an irreducible homogeneous polynomial), is an invariant curve of $\F_n$ with $K_n$ cofactor of $g.$ It has that   $F_n(g_1p)=g_1F_n(p)+pF_n(g_1)=K_ng_1p,$ that is,
$g_1(pK_n-F_n(p))=pF_n(g_1).$ 
From the irreducibility of $g_1$, it has two situations: either $g_1$ is an irreducible invariant curve of $\F_n$, in such case, $p$ is also an invariant curve of $\F_n$ and we repeat the process for $p$. Or $p=qg_1,$ i.e. $g=g_1^2q$. We now have that $F_n(g_1^2q)=g_1^2F_n(q)+2qg_1F_n(g_1)=K_ng_1^2q.$ Thus, 
$g_1(qK_r-F_r(q))=2qF_r(g_1).$ Reasoning of similar way, it completes the proof.\\
The second part is obtained easily.
\end{proof}

\begin{proposition}\label{pro:ini2} Given $\F_n\in\QH_n,$ any factor of $h$ 
is an invariant curve of $\F_n.$ Conversely, any homogeneous polynomial invariant curve of $\F_n$ is a factor of $h.$ \\
Moreover, if $I$ is a polynomial first integral of $\F_n,$ then $I=g_1^{n_1}g_2^{n_2}\cdots g_m^{n_m}$ where $g_1,\ \ldots, g_{n_m}$ are all the irreducible factors of $h$ and $n_i>0.$
\end{proposition}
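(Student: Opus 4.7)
The plan is to derive all three claims from the single identity $F_n(h)=(n+1)\mu h$ combined with Proposition \ref{pro:ini1}. Writing $\F_n=\X_h+\mu\D$, we have $\X_h(h)=-h_y h_x+h_x h_y=0$ and, by Euler's identity, $\D(h)=(n+1)\,h$, so $F_n(h)=(n+1)\mu h$. Hence $h$ itself is an invariant curve of $\F_n$ with cofactor $(n+1)\mu$. By Proposition \ref{pro:ini1} applied to $h$, every irreducible factor of $h$ is an invariant curve of $\F_n$; then any factor of $h$, being a product of some of these irreducible invariant curves, is again an invariant curve by the same proposition.

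For the converse, by Proposition \ref{pro:ini1} it suffices to prove that any irreducible homogeneous polynomial $g$ that is an invariant curve of $\F_n$ divides $h$. Euler gives $\D(g)=(\deg g)\,g$, so $g$ is also an invariant curve of $\D$; hence both $\D$ and $\F_n$ are tangent to the curve $\{g=0\}$. Since $g$ is irreducible and non-constant, at least one of $g_x,g_y$ is nonzero, and a degree comparison shows $g$ is coprime with it; thus the singular locus of $\{g=0\}$ is finite and its smooth part is dense. At any smooth point the tangent space to $\{g=0\}$ is one-dimensional, forcing $\D$ and $\F_n$ to be collinear there; therefore $\D\wedge\F_n=(n+1)\,h$ vanishes on a dense subset of $\{g=0\}$, and hence on all of $\{g=0\}$. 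Because $(g)$ is a prime ideal in $\C[x,y]$, the Hilbert Nullstellensatz yields $g\mid h$.

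The last assertion is then immediate: a polynomial first integral $I$ satisfies $F_n(I)=0$, so it is a homogeneous polynomial invariant curve of $\F_n$ with cofactor $0$. Applying the converse just proved together with Proposition \ref{pro:ini1} gives the claimed factorization $I=g_1^{n_1}\cdots g_m^{n_m}$ with each $g_j$ an irreducible factor of $h$ and $n_j>0$.

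The only delicate step is the converse; the main obstacle is justifying the density of the smooth part of $\{g=0\}$, which depends on irreducibility of $g$ to ensure that its partial derivatives do not both vanish on a positive-dimensional subset of $\{g=0\}$. Once this density is secured, the tangency observation plus the Nullstellensatz closes the argument.
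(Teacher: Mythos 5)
Your treatment of the first two assertions is correct, and the converse is argued by a genuinely different route than the paper's. For the direct part you apply Proposition \ref{pro:ini1} to $h$ itself (via $F_n(h)=(n+1)\mu h$) to extract its irreducible factors as invariant curves, where the paper simply computes $F_n(f)=(X_g(f)+s\mu)f$ for $h=fg$; both are fine. For the converse, the paper uses $\F_n=\X_h+\mu\D$ to conclude that an irreducible invariant curve $g$ of $\F_n$ is an invariant curve of $\X_h$ and then asserts $g\mid h$ without further comment; your tangency argument (both $\D$ and $\F_n$ lie in the one-dimensional tangent space at the smooth points of $\{g=0\}$, so $\D\wedge\F_n=(n+1)h$ vanishes on a Zariski-dense subset of the irreducible curve, whence $g\mid h$ by the Nullstellensatz) actually supplies the justification the paper leaves implicit, at the cost of the finiteness-of-the-singular-locus discussion, which you handle correctly using irreducibility.

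There is, however, a gap in the last assertion. The proposition claims that the $g_j$ are \emph{all} the irreducible factors of $h$, i.e. every irreducible factor of $h$ divides $I$ with strictly positive exponent. Your argument establishes only the weaker containment: the irreducible factors of $I$ are among those of $h$ (the clause ``$n_j>0$'' is then vacuous, since exponents of factors that actually appear are automatically positive). Nothing you wrote excludes, for instance, $I$ being a power of a single factor of $h$ while $h$ has three distinct factors. The missing ingredient is the paper's observation that a homogeneous polynomial first integral vanishes on every invariant curve of $\F_n$: each irreducible factor $f$ of $h$ is an invariant curve by your first part, $\{f=0\}$ is a union of orbits along which $I$ is constant, and these orbits limit to the origin where $I=0$ (this is exactly the integral argument in Lemma \ref{irred1}), so $I\equiv 0$ on $\{f=0\}$ and $f\mid I$ by the Nullstellensatz. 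Adding this step yields $n_i>0$ for every irreducible factor of $h$ and closes the proof.
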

\begin{proof} 
We know that $\F_n=\X_{h}+\mu\D$ with $\mu=\frac{1}{n+1}\mbox{div}(\F_n).$ 
Let $f\in\qh_{s}$ a factor of $h$ then $h=fg$ and $F_n(f)=X_{fg}(f)+\mu D(f)=fX_g(f)+s\mu f=(X_g(f)+s\mu)f$ Therefore, $f$ is an invariant curve of $\F_n.$\\
If $f\in\qh_s^\t$ is an irreducible invariant curve of $\F_n$ with cofactor $K_n$ then $K_nf=F_n(f)=X_{h}(f)+\mu D(f)=X_{h}(f)+s\mu f.$ Thus, $X_{h}(f)=(K_n-s\mu)f$ and $f$ is an invariant curve of $\X_{h}.$ So, $f$ divides to $h.$\\
Last on, if $I$ is a first integral of $\F_n$, it is an invariant curve of $\F_n,$ that is, from Proposition \ref{pro:ini1}, a factorization of $I$ is formed by the irreducible factors of $h$. On the other hand, a first integral is zero on every invariant curve. So, $n_i>0.$    
\end{proof}

\subsection{Necessary condition of analytic integrability}

Now we study the integrability problem for a vector field whose first homogeneous component is a quadratic type Lotka-Volterra.
The following result determines the expression of the lowest degree component in the case of polynomial integrability of this class of vector fields.
\begin{proposition}[Necessary condition of analytic integrability]\label{pronilint}
Let $\F=\F_2+\mbox{h.o.t.}$ be with $\F_2=(x(a_1x+a_2y),y(b_1x+b_2y))^T$ such that the origin of $\dot{\x}=\F_2(\x)$ is isolated ($a_1,\, b_2,\, a_2b_1-a_1b_2$ are different from zero).  If $\F$ is formally integrable,  then $b_1-a_1,\, b_2-a_2$ are different from zero, and  there exists $\Phi_1\in\QH_1$, $\det\parent{D\Phi_1(\zero)}\neq 0$ such that $\G:=\parent{\Phi_1}_*\F=\G_2+\mbox{h.o.t.}$, being $$\G_2=(x(-qx+(q+r)y),y((p+r)x-py))^T,\ p,q,r\in\Natural,\ \mbox{gcd}(p,q,r)=1$$  and $I_M=x^py^q(y-x)^r$ is a polynomial first integral of $\G_2$ of degree $M=p+q+r.$
\end{proposition}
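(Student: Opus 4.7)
The plan is to combine Propositions~\ref{pro:primer} and~\ref{pro:ini2} to pin down the structure of $\F_2$ up to a diagonal linear change. By Proposition~\ref{pro:primer}, formal integrability of $\F$ forces $\F_2$ to admit a homogeneous polynomial first integral; by the final assertion of Proposition~\ref{pro:ini2}, that integral must contain every irreducible factor of $h:=\frac{1}{3}(\D\wedge\F_2)$ with strictly positive exponent.

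A direct computation gives $\D\wedge\F_2 = xy\bigl[(b_1-a_1)x+(b_2-a_2)y\bigr]$, so the irreducible factors of $h$ are $x$, $y$, and the linear form $\ell:=(b_1-a_1)x+(b_2-a_2)y$. If either $b_1=a_1$ or $b_2=a_2$, then $\ell$ is proportional to $x$ or $y$ and the only candidate first integrals have the form $x^py^q$; the cofactor identity $pK_x+qK_y=0$ then reads $(pa_1+qb_1)x+(pa_2+qb_2)y=0$, which combined with $a_1b_2-a_2b_1\neq 0$ forces $p=q=0$, a contradiction. Hence $b_1-a_1\neq 0$ and $b_2-a_2\neq 0$, so there are three distinct invariant lines with cofactors $K_x=a_1x+a_2y$, $K_y=b_1x+b_2y$ and (by a short computation) $K_\ell=a_1x+b_2y$.

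Writing the first integral as $x^py^q\ell^r$ with $p,q,r\in\Natural$ positive and $\mbox{gcd}(p,q,r)=1$, the identity $pK_x+qK_y+rK_\ell\equiv 0$ yields
\[
(p+r)a_1+qb_1=0,\qquad pa_2+(q+r)b_2=0,
\]
which expresses $b_1$ and $a_2$ in terms of $a_1,b_2,p,q,r$. The diagonal map $\Phi_1(x,y)=\bigl(-\frac{a_1}{q}x,\,-\frac{b_2}{p}y\bigr)$ transforms the coefficients of a Lotka--Volterra field by $\tilde a_i=a_i/\alpha$, $\tilde b_i=b_i/\beta$; substituting the two relations above one checks that the new quadruple is exactly $(-q,\,q+r,\,p+r,\,-p)$, and $\ell$ is sent to a scalar multiple of $y-x$. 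A final direct substitution confirms that $I_M=x^py^q(y-x)^r$ is a first integral of $\G_2$ of total degree $M=p+q+r$.

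The key conceptual step is invoking the last assertion of Proposition~\ref{pro:ini2}, which converts the qualitative hypothesis of polynomial integrability into the two quantitative linear conditions on the coefficients displayed above; the remaining work is careful bookkeeping for the diagonal rescaling that normalizes the third invariant line to $y-x=0$.
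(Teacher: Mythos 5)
Your proof is correct and follows essentially the same route as the paper: take the lowest-degree term of $F(I)=0$ to get polynomial integrability of $\F_2$, use Proposition \ref{pro:ini2} to force the first integral to be $x^py^q\ell^r$ with all exponents positive, rule out $b_1=a_1$ and $b_2=a_2$ via $a_2b_1-a_1b_2\neq 0$, derive the two linear relations $(p+r)a_1+qb_1=0$ and $pa_2+(q+r)b_2=0$, and normalize by a diagonal linear change. The only blemish is the stated transformation rule ``$\tilde a_i=a_i/\alpha$, $\tilde b_i=b_i/\beta$'', which should read $\tilde a_1=a_1/\alpha$, $\tilde a_2=a_2/\beta$, $\tilde b_1=b_1/\alpha$, $\tilde b_2=b_2/\beta$; with the correct rule your map $\Phi_1$ does yield the quadruple $(-q,\,q+r,\,p+r,\,-p)$ and sends $\ell$ to a multiple of $y-x$, so the conclusion stands.
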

\begin{proof} Let $I=I_N+\mbox{h.o.t.}$ be a formal first integral of $\F.$ Equation $F(I)=0$ for degree $N+1$ is $F_2(I_N)=0,$ i.e. $\F_2$ is polynomially integrable and $I_N$ is a first integral of $\F_2.$\\
Polynomial $h$ associated to $\F_2$ is $h=\frac{1}{3}xy((b_1-a_1)x+(b_2-a_2)y).$\\
We suppose that $a_1=b_1$ (analogously for $a_2=b_2$). 
From Proposition \ref{pro:ini2}, if there would exist a first integral of $\F_2$, it would have the expression $I_N=x^{n_1}y^{n_2}$ with $n_i>0.$ So, $F_2(I_N)=0$ becomes 
$$n_1a_1+n_2b_1=0,\qquad n_2b_2+n_1a_2=0.$$ Therefore, $n_1=n_2=0$ since  $a_2b_1-a_1b_2\ne 0.$ This contradicts the existence of the first integral.\\
We assume that $b_1\ne a_1$ and $b_2\ne a_2.$ From Proposition \ref{pro:ini2}, the first integral is $I_{Ms}=x^{ps}y^{qs}((b_1-a_1)x+(b_2-a_2)y)^{rs}$ with $p,q,r,s$ natural numbers, $ \mbox{gcd}(p,q,r)=1$ and $M=p+q+r.$ By imposing $F_2(I_{Ms})=0,$ it has that 
 $$(p+r)a_1+qb_1=0,\qquad (q+r)b_2+pa_2=0,$$ i.e. 
$a_2 = -\frac{b_2(q+r)}{p}$ and $b_1 = -\frac{a_1(p+r)}{q}.$\\ The linear change $\Phi_1(x,y,t)=(-pa_1x,-qb_2y,st/pq)$ transforms $\F_2$ into $\G_2= (x(-qx+(q+r)y),y((p+r)x-py))^T $
being $I_M=x^py^q(y-x)^r$ a first integral of $\G_2.$

\end{proof}

\section{Normal Form for perturbations of homogeneous quadratic Lotka-Volterra systems}

We will not consider questions of convergence in the normal forms because the formal integrability is equivalent to the analytical integrability for the vector fields analyzed, see \cite{Mattei80}.

In \cite{AGG17}, the authors provide an orbital normal form of the vector field whose first quasi-homogeneous term is non-conservative. Here, we provide the expression of the normal form for the vector field $\F=\F_2+\mbox{h.o.t.}$ with $\F_2=\X_h+\mu\D\in\QH_2.$

For every $k\in\mathbb{N}$, we fix the subspaces $\Delta_{k+2}$ such that $\qh_{k+2}=\Delta_{k+2}\bigoplus h \qh_{k-1}.$ We consider the linear operators:\\
\bean
\begin{array}{rcl}\Opl_{k}&:&\qh_{k-1}
\longrightarrow \qh_{k}^{\t}\\
 && \eta_{k-1} \longrightarrow F_2(\eta_{k-1}),\end{array}
\eean and 
\bean
 \begin{array}{rcl}
 \Opl_{k+3}^{\co} &:& \Delta_{k+2} \longrightarrow 
\Delta_{k+3} \\ &&
\gx_{k+2} \longrightarrow  \Proy_{\Delta_{k+3}}
(F_{2}-\fracp{3}{k+3}\mu D)(\gx_{k+2}).
 \end{array} 
 \eean

\begin{theorem}\label{teoFNlequivFn} Let $\F=\sum_{j\geq 2}\F_j$, $\F_j\in\QH_j$. If $\Ker\parent{\Opl_{k+3}^{\co}}=\llave{0}$ for all $k\in\mathbb{N}$ then $\F$ is orbitally equivalent to $$\G=\F_2+\sum_{j>2}\G_{j}, \ \mbox{with} \ \G_{j}=\X_{g_{j+1}}+\eta_{j-1}\D\in\QH_{j},$$
 where $g_{j+1}\in\Cor\parent{\Opl^{c}_{j+1}}$ and $\eta_{j-1}\in\Cor\parent{\Opl_{j-1}}$.\ (where $\mbox{Cor}(\cdot)$ is a complementary subspace to $\mbox{Range}(\cdot)).$
 \end{theorem}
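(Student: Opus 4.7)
The plan is an inductive normal-form reduction on the homogeneous degree, combining near-identity changes of coordinates with time reparametrizations. I would assume inductively that a composition of orbital transformations has already brought $\F$ to
$$\F=\F_2+\sum_{j=3}^{k+1}\G_j+\F_{k+2}+\mbox{h.o.t.},$$
with each $\G_j=\X_{g_{j+1}}+\eta_{j-1}\D$ satisfying $g_{j+1}\in\Cor(\ell_{j+1}^c)$ and $\eta_{j-1}\in\Cor(\ell_{j-1})$, and then construct an orbital equivalence whose lowest-order effect is at degree $k+2$ that brings $\F_{k+2}$ into the prescribed shape without disturbing the lower-degree part. Iterating over $k$ and composing the transformations formally (convergence is irrelevant thanks to Mattei--Moussu) gives the result.

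At the inductive step I would first invoke the splitting proposition (\cite[Prop.~2.7]{AlgabaNonlinearity09}) to write $\F_{k+2}=\X_{g_{k+3}}+\eta_{k+1}\D$, and then use the fixed direct sum $\qh_{k+3}=\Delta_{k+3}\oplus h\qh_{k}$ to decompose $g_{k+3}=g_{k+3}^{\Delta}+h\tilde g_{k}$. The admissible orbital operations whose lowest-order effect sits exactly at degree $k+2$ are of three types: a near-identity coordinate change generated by a Hamiltonian-type field $\X_{g_{k+2}}$ with $g_{k+2}\in\Delta_{k+2}$, a near-identity coordinate change generated by a dissipative-type field $\tilde\eta_{k}\D$, and a time reparametrization $dt\mapsto(1+T_{k})\,dt$ with $T_{k}\in\qh_{k}$. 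Each of these modifies $\F_{k+2}$ by an explicit homogeneous vector field -- a Lie bracket in the first two cases, the scalar multiple $T_{k}\F_2$ in the third -- while leaving the terms of degree $\le k+1$ untouched.

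The homological identification is the technical heart of the argument. A direct Lie-bracket computation based on $\F_2=\X_{h}+\mu\D$ together with the identity $\X_{\phi\psi}=\phi\X_{\psi}+\psi\X_{\phi}$ should yield that the $\Delta_{k+3}$-projection of the Hamiltonian part of $[\X_{g_{k+2}},\F_2]$ is precisely $\Proy_{\Delta_{k+3}}[(F_{2}-\frac{3}{k+3}\mu D)(g_{k+2})]=\ell_{k+3}^{c}(g_{k+2})$. A similar computation for the remaining two operations shows that the dissipative generator and the time rescaling jointly absorb the $h\qh_{k}$ component of $g_{k+3}$ and modify $\eta_{k+1}$ by an arbitrary element of $\Range(\ell_{k+1})$, via the map $\tilde\eta_{k}\mapsto F_{2}(\tilde\eta_{k})$ which is exactly $\ell_{k+1}$. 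Choosing the three generators to subtract the corresponding Range components places the new $g_{k+3}$ in $\Cor(\ell_{k+3}^{c})\subset\Delta_{k+3}$ and the new $\eta_{k+1}$ in $\Cor(\ell_{k+1})$, which is the target normal form at this degree.

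The hypothesis $\Ker(\ell_{k+3}^{c})=\llave{0}$ enters at this point to decouple the reductions: it guarantees that the Hamiltonian generator $g_{k+2}$ required to kill the $\Range(\ell_{k+3}^{c})$-part of $g_{k+3}^{\Delta}$ is uniquely determined, so that its side-effects on the $h\qh_{k}$ and dissipative slots do not conflict with the independent choices of $\tilde\eta_{k}$ and $T_{k}$ used for those slots. The main obstacle, and the bulk of the write-up, will be the bookkeeping in the Lie-bracket computations, in particular the careful tracking of the coefficient $\frac{3}{k+3}$ that appears in the formula for $\ell_{k+3}^{c}$; once the three operators on the relevant subspaces are matched to the three classes of transformations, the induction closes and the full orbital equivalence is obtained as the formal composition of the homogeneous transformations applied at each degree.
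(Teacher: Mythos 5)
The paper does not actually prove Theorem \ref{teoFNlequivFn}: it is stated as part of the normal-form machinery imported from \cite{AGG17} and \cite{AlgabaNonlinearity09}, so there is no in-paper argument to compare against line by line. Measured against the standard argument of those references, your sketch has the right architecture: the degree-by-degree induction, the splitting $\F_{k+2}=\X_{g_{k+3}}+\eta_{k+1}\D$, the three classes of generators with the correct degree bookkeeping (generators in $\QH_{k+1}$ and a reparametrizing factor in $\qh_k$ to act at degree $k+2$), the restriction of the Hamiltonian generator to $\Delta_{k+2}$ to avoid redundancy with the time rescaling, and the matching of each generator to the slot it controls. This is essentially the intended proof.

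Two caveats keep this a sketch rather than a proof. First, the technical heart is asserted, not derived: the identity $\Proy_{\Delta_{k+3}}$ of the conservative part of $[\X_{g_{k+2}},\F_2]$ with $\Proy_{\Delta_{k+3}}\parent{(F_2-\fracp{3}{k+3}\mu D)(g_{k+2})}$, and the computation showing that $T_k\F_2=\X_{\frac{3}{k+3}T_kh}+\frac{1}{k+3}(F_2(T_k)+3T_k\mu)\D$ (which is precisely what lets the reparametrization absorb the $h\qh_k$ component of $g_{k+3}$ while only perturbing the dissipative slot), are both flagged with ``should yield''; these Lie-bracket identities are the content of the theorem and must be written out. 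Second, your account of where $\Ker\parent{\Opl^{\co}_{k+3}}=\llave{0}$ enters is off: since $\dim\Delta_{k+2}=\dim\Delta_{k+3}=3$, triviality of the kernel is equivalent to surjectivity of $\Opl^{\co}_{k+3}$, and what the induction actually uses is that the residual conservative term can be pushed into $\Cor\parent{\Opl^{\co}_{k+3}}$ (which is then $\llave{0}$, consistent with how the theorem is applied in Lemma \ref{lebasepkt} and Theorem \ref{NFhfsimples}) and that the leftover effects on the dissipative slot are subsequently absorbed by the choices of $\tilde\eta_k$ and $T_k$; uniqueness of $g_{k+2}$ is a consequence, not the mechanism.
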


Next results are referred to vector fields whose first homogeneous component is polynomially integrable and quadratic Lotka-Volterra type.

\begin{lemma}\label{lebasepkt}
Consider $\F_2= (x(-qx+(q+r)y),y((p+r)x-py))^T$ with $p,q,r$ natural numbers. It has that 
for all $k\in\mathbb{N},\ \Ker\parent{\Opl_{k+3}^{\co}}=\llave{0}$. Moreover, $\Cor\parent{\Opl_{k+3}^{\co}}=\llave{0}$.
\end{lemma}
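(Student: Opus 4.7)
The plan is to exhibit $\Opl_{k+3}^{\co}$ in triangular form with a visibly nonzero determinant. First, $\dim\qh_{k+2}=k+3$ and $\dim h\qh_{k-1}=k$ (multiplication by $h$ is injective on polynomials), so $\dim\Delta_{k+2}=3=\dim\Delta_{k+3}$. Hence injectivity, surjectivity and bijectivity coincide, and triviality of $\Cor\parent{\Opl_{k+3}^{\co}}$ will follow from triviality of $\Ker\parent{\Opl_{k+3}^{\co}}$. Since $F_2(h)=3\mu h$ (immediate from $\F_2=\X_h+\mu\D$ together with $X_h(h)=0$ and Euler's identity), the operator $L_k:=F_2-\frac{3}{k+3}\mu D$ sends $h\qh_{k-1}$ into $h\qh_k$ and thus induces a well-defined map $\bar L_k:\qh_{k+2}/h\qh_{k-1}\to\qh_{k+3}/h\qh_k$; the claim is intrinsic to $\bar L_k$, so I may pick whichever complements are most convenient.

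For the complements I would take $\Delta_{k+2}=\spande\llave{x^{k+2},\,y^{k+2},\,xy^{k+1}}$ and $\Delta_{k+3}=\spande\llave{x^{k+3},\,y^{k+3},\,xy^{k+2}}$ (each routinely verified to be a complement by restricting to the three lines $x=0$, $y=0$, $y=x$ on which every element of $h\qh_\ell$ vanishes, while the three chosen monomials are independent modulo that vanishing condition). Using $\F_2=\X_h+\mu\D$ and $D(g)=(k+2)g$, the operator becomes
$$L_k(g)\;=\;X_h(g)+\frac{k(k+2)}{k+3}\,\mu\,g\qquad(g\in\qh_{k+2}).$$
The essential simplification is the modular identity $x^2y\equiv xy^2\pmod h$ (since $h$ is a positive multiple of $xy^2-x^2y$), which by iteration gives $x^{a}y^{b}\equiv xy^{\,a+b-1}\pmod{h\qh_k}$ whenever $a\ge 1$ and $a+b=k+3$. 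Each of $L_k(x^{k+2})$, $L_k(y^{k+2})$, $L_k(xy^{k+1})$ is a sum of just two monomials in degree $k+3$; after this reduction all of their off-diagonal contributions collapse onto $xy^{k+2}$, so the matrix of $\Opl_{k+3}^{\co}$ in the chosen bases is lower triangular.

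A direct computation of the three diagonal entries yields, up to the common positive factor $\frac{k+2}{k+3}$, the quantities $-(kq+M)$, $-(kp+M)$, and $kr+M$ with $M:=p+q+r$, each nonzero for every $k\in\Natural$ since $p,q,r\in\Natural$. Therefore $\det\Opl_{k+3}^{\co}\neq 0$, whence $\Ker\parent{\Opl_{k+3}^{\co}}=\Cor\parent{\Opl_{k+3}^{\co}}=\llave{0}$. The main obstacle is bookkeeping: the mod-$h$ reduction is precisely what triangularizes the matrix and forces the determinant to factor as a product of strictly positive quantities, which would otherwise be obscured in a dense $3\times 3$ calculation where positivity is not transparent.
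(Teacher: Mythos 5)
Your proof is correct and follows essentially the same route as the paper: both arguments compute the matrix of $\Opl_{k+3}^{\co}$ on explicit monomial complements of $h\qh_{k-1}$ and $h\qh_{k}$, reduce modulo $h$ via $x^2y\equiv xy^2$, and read off a triangular matrix whose determinant is $\pm\frac{(k+2)^3}{(k+3)^3}(kp+M)(kq+M)(kr+M)\neq 0$ (your choice of $xy^{k+1},\,xy^{k+2}$ in place of the paper's $x^{k+1}y,\,x^{k+2}y$ changes nothing, and your preliminary remarks on dimensions and independence of the complement are a welcome justification of what the paper leaves implicit). One small caveat: the reduction $x^{a}y^{b}\equiv xy^{a+b-1}\pmod{h\qh_k}$ needs $b\ge 1$ as well as $a\ge 1$ (it fails for $x^{k+3}$, since $h$ is divisible by $y$), but you only ever apply it to monomials divisible by $y$, so the argument stands.
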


\begin{proof}
Vector field $\F_2=\X_h+\mu\D$ with $h=\frac{p+q+r}{3}xy(x-y)$ and $\mu=\frac{1}{3}((-2q+p+r)x+(q+r-2p)y).$
We choose the bases $\Delta_{k+2}=\langle x^{k+2},\, x^{k+1}y,\, y^{k+2}\rangle$ and 
$\Delta_{k+3}=\langle x^{k+3},\, x^{k+2}y,\, y^{k+3}\rangle.$\\
Consider $$\G_2^{(k+3)}=\F_2-\fracp{3}{k+3}\mu\D=
\left(\begin{array}{l}
(-q-\fracp{-2q+p+r}{3+k})x^2+(q+r-\fracp{q+r-2p}{3+k})xy\\
(p+r-\fracp{-2q+p+r}{3+k})xy+(-p-\fracp{q+r-2p}{3+k})y^2
\end{array}\right).$$
We have that 
\bean
G_2^{(k+3)}(x^{k+2})&=&A_1x^{k+3}+B_1x^{k+2}y,\\
G_2^{(k+3)}(x^{k+1}y)&=&A_2x^{k+2}y+B_2x^{k+1}y^2=(A_2+B_2)x^{k+2}y-\fracp{3B_2}{p+q+r}x^kh,\\
G_2^{(k+3)}(y^{k+2})&=&A_3xy^{k+2}+B_3y^{k+3}=A_3x^{k+2}y+A_3hp_k(x,y)+B_3y^{k+3},
\eean
with
\bean
A_1&=&-\fracp{2+k}{3+k}(q+qk+p+r),\\
B_1&=&\fracp{2+k}{3+k}(2q+qk+2r+rk+2p),\\
A_2+B_2&=&\fracp{2+k}{3+k}(p+q+r+rk),\\
A_3&=&\fracp{2+k}{3+k}(2p+pk+2r+rk+2q),\\
B_3&=&-\fracp{2+k}{3+k}(p+pk+q+r),
\eean
and $p_k$ homogeneous polynomial of degree $k$.
In this way, the determinant of the matrix of the operator $\Opl_{k+3}^{\co}$ is 
$$ \fracp{(k+2)^3}{(k+3)^3}(q+qk+p+r)(q+p+rk+r)(p+pk+q+r),$$
which is different from zero. Therefore, both  $\Ker\parent{\Opl_{k+3}^{\co}}$ and $\Cor\parent{\Opl_{k+3}^{\co}}$ are trivial subspaces.
\end{proof}
For computing  $\Cor\parent{\Opl_k}$ with $k>n$, we need the following two technical lemmas.

\begin{lemma}\label{irred1}
Consider $\F_{n}\in \QH_{n}$
irreducible and $f \in \mathbb{C}[x,y]$ an irreducible invariant curve of  $\F_{n}$. If $F_n(p_k) \in \left<f\right>$ with $p_k\in \qh_{k},$
then $p_k \in \left<f\right>$.
\end{lemma}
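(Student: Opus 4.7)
Since $f$ is an irreducible homogeneous polynomial in $\mathbb{C}[x,y]$, it is a linear form, so after a linear change of coordinates I can parametrize the line $\{f=0\}$ by a single variable $t$. The plan is to reduce the statement to a one-variable computation on this line.

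The invariance condition $F_n(f)=K_n f$ means $F_n$ sends the ideal $\langle f\rangle$ into itself, so it descends to a $\mathbb{C}$-linear derivation $\overline{F_n}$ on the quotient $\mathbb{C}[x,y]/\langle f\rangle\cong\mathbb{C}[t]$. Every derivation of $\mathbb{C}[t]$ takes the form $g(t)\,d/dt$ for a unique polynomial $g$. The crucial point is that $g\not\equiv 0$: otherwise both components of $\F_n=(P,Q)^T$ would vanish on $\{f=0\}$, forcing $f\mid P$ and $f\mid Q$ by the Nullstellensatz, contradicting the irreducibility (coprimality of the two components) of $\F_n$.

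With $g\neq 0$ in hand, the hypothesis $F_n(p_k)\in\langle f\rangle$ translates to $\overline{F_n}(\bar p_k)=g(t)\,\bar p_k'(t)=0$ in the integral domain $\mathbb{C}[t]$, whence $\bar p_k'(t)=0$ and $\bar p_k$ is a constant. But $p_k\in\qh_k$ is homogeneous of degree $k$, so its restriction to the line has the form $\bar p_k=c\,t^k$; this is constant only for $c=0$ when $k\ge 1$. Therefore $p_k$ vanishes on $\{f=0\}$, and by the Nullstellensatz $f\mid p_k$, which is the claim.

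The only step with real content is the nonvanishing of $g$, and it is here---and essentially only here---that the irreducibility of $\F_n$ enters. Everything else is a formal reduction modulo the principal ideal $\langle f\rangle$, together with the observation that a homogeneous polynomial of positive degree cannot restrict to a nonzero constant on a line through the origin.
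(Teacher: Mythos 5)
Your proof is correct and takes a genuinely different route from the paper's. The paper argues dynamically: it splits into the cases $F_n(p_k)=0$ and $F_n(p_k)=f\nu\neq 0$, parametrizes $\{f=0\}$ by a solution curve $\gamma(t)$ with $\gamma(t)\to\zero$, and integrates $\frac{d}{ds}p_k(\gamma(s))=F_n(p_k)(\gamma(s))$ along $\gamma$ to conclude that $p_k$ vanishes on the curve, finishing with the Nullstellensatz exactly as you do. Your reduction modulo the prime ideal $\left<f\right>$ replaces all of this by a single algebraic observation --- a nonzero derivation of $\mathbb{C}[t]$ annihilates only constants --- and it handles both of the paper's cases at once while avoiding the analytic points the paper leaves implicit (existence of a parametrizing orbit limiting to the origin, convergence of the improper integral, Zariski density of one orbit in $\{f=0\}$). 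You also locate precisely where the irreducibility of $\F_n$ enters (the nonvanishing of $g$, since otherwise $f$ would divide both components), which the paper uses only tacitly. One caveat: you assume $f$ is homogeneous in order to identify $\{f=0\}$ with a line, whereas the lemma as stated only requires $f\in\mathbb{C}[x,y]$ irreducible, and the paper's dynamical argument is phrased for that general case. Since every application in the paper takes $f\in\{x,\,y,\,x-y\}$ this costs nothing here, but to cover the literal statement you should either add ``homogeneous'' to the hypothesis or note that your argument extends: the derivation still descends to the integral domain $\mathbb{C}[x,y]/\left<f\right>$, the field of constants of a nonzero derivation of its fraction field is algebraic over $\mathbb{C}$ and hence equals $\mathbb{C}$, and $p_k(\zero)=0$ together with $f(\zero)=0$ then forces $\bar p_k=0$.
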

\begin{proof}
If $F_n(p_k)= 0$ then $p_k$ is a first integral of
$\dot{\x}=\F_{n}$. A first integral of $\F_{n}$ vanishes on any invariant curve of it, i.e., $p_k(\x)=0$ when
$f(\x)=0$. Therefore, by Hilbert`s Nullstellensatz $p_k\in rad\left<f\right>$.
Since $\left<f\right>$ is a prime ideal, then  $\left<f\right>=rad\left<f\right>$, in consequence
$p \in \left<f\right>$.

If $F_n(p_k)\neq 0$, let
$\nu\in \mathbb{C}[x,y]\setminus\llave{0}$ such that
$f\nu=F_n(p_k)$. Consider $\gamma(t)$, real or
complex, a solution curve of $\dot{\x}=\F_{n}(\x)$ which is a
parametrization of $f(\x)=0$. We assume that $\lim_{t\to
-\infty}\gamma(t)=0$, (the other case $\lim_{t\to
+\infty}\gamma(t)=0$ is proved in a similar way). Taking into account that $p_k(\zero)=0$
then
\bean p_k(\gamma(t))&=&p_k(\gamma(t))-p_k(\zero)=\int_{-\infty}^t \fracp{d p_k(\gamma(s))ds}{ds}=\int_{-\infty}^t \nabla_\x p_k \cdot\F_n(\gamma(s))ds \\
&=&\int_{-\infty}^t
F_n(p_k)(\gamma(s))ds=\int_{-\infty}^t f (\gamma(s))\nu(\gamma(s))ds=0.\eean
 Recalling that $f(\x)=0$ is the union of orbits, we have that $p_k(\x)=0$ when $f(\x)=0$. Therefore, by Hilbert`s Nullstellensatz $p_k\in rad\left<f\right>$. Since $\left<f\right>$ is a prime ideal, then  $\left<f\right>=rad\left<f\right>$, in consequence
$p_k \in \left<f\right>$.
\end{proof}
\begin{remark} The hypothesis of the irreducibility of the invariant curve is fundamental. For instance, if we consider $\F_{2}:=(-2x^2,-3x^2-2xy+3y^2)^T\in\QH_2$ irreducible and the invariant curve $(y-x)^2,$ for $p_3=x^2(y-x)$ we have that $F_{2}(p_3)=3x^2(y-x)^2\in \langle (y-x)^2\rangle$ and nevertheless $p_3\notin\langle (y-x)^2 \rangle$.
\end{remark}
\begin{lemma}\label{irredfm} Consider $\F_2=(x(-qx+(q+r)y),y((p+r)x-py))^T$ with $p,q,r$ natural numbers. Let $k$ and $m$ natural numbers with $p+q+r\ne p\frac{k}{j},\ p+q+r\ne q\frac{k}{j},\  p+q+r\ne r\frac{k}{j},\ j=1,\dots, m-1.$ If $p_k\in \qh_{k}$ such that $F_2(p_k) \in \left<f_i^m\right>,$ being $f_1=x,\, f_2=y,\, f_3=x-y,$ invariant curves of $\F_2$, then $p_k \in \left<f_i^m\right>,\ i=1,2,3.$  \end{lemma}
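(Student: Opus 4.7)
The plan is to induct on $m$. The base case $m=1$ is exactly Lemma~\ref{irred1} applied to each of the three irreducible invariant curves $f_1=x$, $f_2=y$, $f_3=x-y$ of $\F_2$; this requires observing that $\F_2=(xP_1,yQ_1)^T$ is irreducible as a polynomial vector field, which holds because none of $x,y,x-y$ divides both components simultaneously (each such divisibility would force one of $p,q,r$ to vanish, contradicting that the origin is an isolated singular point).

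For the inductive step I assume the lemma holds for $m-1$. Since the hypothesis for $m$ (non-resonance conditions for $j=1,\dots,m-1$) strictly contains the hypothesis for $m-1$, and $F_2(p_k)\in\langle f_i^m\rangle\subset\langle f_i^{m-1}\rangle$, the inductive hypothesis yields $p_k\in\langle f_i^{m-1}\rangle$. I then write $p_k=f_i^{m-1}\tilde q$ with $\tilde q\in\qh_{k-m+1}$ and apply the Leibniz rule to obtain
\[
F_2(p_k)=f_i^{m-1}\bigl[(m-1)K_i\tilde q+F_2(\tilde q)\bigr],
\]
where $K_i$ denotes the cofactor of $f_i$. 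Thus $F_2(p_k)\in\langle f_i^m\rangle$ is equivalent to the divisibility $f_i\mid(m-1)K_i\tilde q+F_2(\tilde q)$.

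The decisive step is to restrict this divisibility to the invariant line $f_i=0$. Because $f_i$ is linear and $\tilde q$ is homogeneous, $\tilde q|_{f_i=0}$ is a single monomial up to scalar (for instance $c\,y^{k-m+1}$ when $i=1$). Using the identity
\[
F_2(x^ay^b)=x^ay^b\bigl[(b(p+r)-aq)x+(a(q+r)-bp)y\bigr],
\]
only the $a=0$ term of $\tilde q$ survives modulo $x$, and the divisibility condition reduces to $c\bigl[(m-1)(q+r)-p(k-m+1)\bigr]=0$; the vanishing of the bracket is exactly the excluded resonance $p+q+r=pk/(m-1)$ (the case $j=m-1$), so $c=0$ and therefore $x\mid\tilde q$, giving $p_k\in\langle x^m\rangle$. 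The cases $i=2$ and $i=3$ are entirely analogous, producing respectively the resonances $p+q+r=qk/(m-1)$ and $p+q+r=rk/(m-1)$; for $i=3$ one works modulo $x-y$, which amounts to substituting $x=y$ and using $F_2(x^ay^b)|_{x=y}=(a+b)r\,y^{a+b+1}$. The only delicate point---and really the sole conceptual obstacle---is bookkeeping: I must verify that the multiplier $m-1$ from the Leibniz rule combines with the evaluation of $F_2$ on each invariant line to produce exactly the $j=m-1$ instance of the stated resonance, while the earlier instances $j=1,\dots,m-2$ are absorbed by the inductive hypothesis.
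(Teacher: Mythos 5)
Your proof is correct, and it shares the paper's skeleton---induction on $m$, using the inductive hypothesis to write $p_k=f_i^{m-1}\tilde q$, and Leibniz to reduce everything to the divisibility $f_i\mid (m-1)K_i\tilde q+F_2(\tilde q)$---but you close the inductive step by a genuinely different mechanism. The paper uses Euler's identity $D(\tilde q)=(k-m+1)\tilde q$ to rewrite that bracket as $\parent{F_2+\fracp{(m-1)K_i}{k-m+1}D}(\tilde q)$, observes that the modified homogeneous vector field $\F_2+\fracp{(m-1)K_i}{k-m+1}\D$ is irreducible exactly when $p+q+r\ne p\fracp{k}{m-1}$ (resp.\ $q,r$), and then re-invokes Lemma \ref{irred1}, so the Nullstellensatz/orbit-integral machinery is used at every level of the induction. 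You instead evaluate the bracket directly on the line $f_i=0$, where the homogeneous $\tilde q$ restricts to $c$ times a single monomial, and read off that the restriction equals $c$ times a constant whose vanishing is precisely the excluded resonance $j=m-1$; this confines Lemma \ref{irred1} to the base case and makes the arithmetic origin of the hypotheses transparent. I checked your brackets: $(m-1)(q+r)-p(k-m+1)=0$ iff $p+q+r=pk/(m-1)$, and the analogues for $y$ and $x-y$ come out right using $K_2|_{y=0}=(p+r)x$ and $K_3|_{x=y}=-(p+q)y$ with $F_2(x^ay^b)|_{x=y}=(a+b)r\,y^{a+b+1}$. The one point to tighten is the irreducibility of $\F_2$ in the base case: a common factor of the two components need not be one of $x,y,x-y$, so besides those you must also exclude proportionality of the linear factors $-qx+(q+r)y$ and $(p+r)x-py$, which fails because it would force $r(p+q+r)=0$; this is a one-line fix, not a gap.
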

\begin{proof} 
We prove the case $i=1,\ (f_1=x),$ the cases $i=2,3$ are analogous.\\
Lemma \ref{irred1} proves the statement for $m=1$. 

We first consider the case $m=2$. We denote by $K_1=-qx+(q+r)y$ the cofactor of $x$. If $F_{2}(p_k)\in\langle x^2\rangle$ then $F_{2}(p_k)\in\langle x\rangle$ and by Lemma \ref{irred1} we have that there exists $p_{k-1}\in\qh_{k-1}$ such that $p_k=xp_{k-1}$, therefore
\bean
F_{2}(p_k)&=&F_{2}(xp_{k-1})=p_{k-1}F_{2}(x)+xF_{2}(p_{k-1})=p_{k-1}K_{1}x+xF_{2}(p_{k-1})\\
&=& x\parent{\fracp{K_{1}}{k-1}D(p_{k-1})+F_{2}(p_{k-1})}=x(F_{2}+\fracp{K_{1}}{k-1}D)(p_{k-1})\in\langle x^2\rangle.\eean
 Hence $(F_{2}+\fracp{K_{1}}{k-1}D)(p_{k-1})\in\langle x\rangle.$  Vector field 
 $$\F_{2}+\fracp{K_{1}}{k-1}\D=
 \frac{1}{k-1}\left(
 \begin{array}{c}xk(-qx+(q+r)y)\\ y((p+r)k-p-q-r)x+(-pk+p+q+r)y)\end{array}
 \right)$$ is irreducible if, and only if, $p+q+r\ne pk.$ Applying Lemma \ref{irred1} we have that
$p_{k-1}\in\langle x\rangle$ and consequently $p_k\in\langle x^2\rangle$.

Consider now the case $m=3$. If $F_{2}(p_{k})\in\langle x^3\rangle$ then $F_{2}(p_{k})\in\langle x^2\rangle$ and by the previous paragraph we have that there exists $p_{k-2}\in\qh_{k-2}$ such that $p_k=x^2p_{k-2}$, therefore
\bean F_{2}(p_k)&=&F_{2}(x^2p_{k-2})=p_{k-2}F_{2}(x^2)+x^2F_{2}(p_{k-2})=2p_{k-2}K_{1}x^2+x^2F_{2}(p_{k-2})\\
&=&x^2\parent{\fracp{2K_{1}}{k-2}D(p_{k-2})+F_{2}(p_{k-2})}=x^2(F_{2}+\fracp{2K_{1}}{k-2}D)(p_{k-2})\in\langle x^3\rangle.\eean
Hence $(F_{2}+\fracp{2K_{1}}{k-2}D)(p_{k-2})\in\langle x\rangle$ and as $\F_{2}+\fracp{2K_{1}}{k-2}\D$ is irreducible if, and only if, $p+q+r\ne p\fracp{k}{2},$ 
applying Lemma \ref{irred1} we have that
$p_{k-2}\in\langle x\rangle$ and consequently $p_{k}\in\langle x^3\rangle$.
Reasoning by induction we get the result for $m\in\mathbb{N}$.\\
Reasoning as before, it is easy to prove that for $f_2=y$ and $f_3=x-y,$ the conditions are $p+q+r\ne q\frac{k}{j}$ and  $p+q+r\ne r\frac{k}{j},\ j=1,\dots,\ m-1,$ respectively. \end{proof}

Next statement establishes a cyclicity relation between the co-ranges of the operators $\ell_k$.
\begin{lemma}\label{ciclicidadl}
Consider $\F_2=(x(-qx+(q+r)y),y((p+r)x-py))^T$ with $p,q,r$ natural numbers and $M=p+q+r.$  For $k\ge 2$, it is always possible to choose $\Cor(\ell_{k+M})$, a complementary subspace to $\Range(\ell_{k+M})$, such that $\Cor(\ell_{k+M}) = I_M \Cor(\ell_{k})$ being $I_M=x^py^q(x-y)^r.$
\end{lemma}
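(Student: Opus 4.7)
The plan is to use the key fact that $I_M$ is a first integral of $\F_2$, so $F_2(I_M\eta)=I_M F_2(\eta)$ for every polynomial $\eta$. Consequently, multiplication by $I_M$ intertwines the operators: $I_M\cdot\Range(\ell_k)\subseteq\Range(\ell_{k+M})$. I would first \emph{choose} any complement $\Cor(\ell_k)$ of $\Range(\ell_k)$ in $\qh_k$, then argue that $I_M\Cor(\ell_k)$ is automatically a legitimate complement of $\Range(\ell_{k+M})$ in $\qh_{k+M}$. This amounts to checking two things: the sum $\Range(\ell_{k+M})+I_M\Cor(\ell_k)$ is direct, and the dimensions match.

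For the triviality of the intersection, suppose $I_M c=F_2(\eta)$ with $c\in\Cor(\ell_k)$ and $\eta\in\qh_{k+M-1}$. Since $I_M=x^py^q(x-y)^r$ (up to sign), $F_2(\eta)$ lies simultaneously in $\langle x^p\rangle$, $\langle y^q\rangle$ and $\langle (x-y)^r\rangle$. I would apply Lemma \ref{irredfm} to each of the three irreducible invariant curves; the arithmetic hypotheses reduce to $M\ne p(k+M-1)/j$ for $j=1,\dots,p-1$, i.e.\ $(p-j)M\ne -p(k-1)$, and this is automatic since the left side is positive and the right is nonpositive (and likewise for $y$ and $x-y$). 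Hence $\eta\in\langle x^p\rangle\cap\langle y^q\rangle\cap\langle(x-y)^r\rangle=\langle I_M\rangle$, so $\eta=I_M\xi$ for some $\xi\in\qh_{k-1}$. Then $I_M c=F_2(I_M\xi)=I_M F_2(\xi)$, giving $c=\ell_k(\xi)\in\Range(\ell_k)$; combined with $c\in\Cor(\ell_k)$ this forces $c=0$.

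For the dimension count, multiplication by $I_M$ is injective on $\C[x,y]$, so $\dim(I_M\Cor(\ell_k))=\dim\Cor(\ell_k)=\dim\qh_k-\dim\Range(\ell_k)=1+\dim\Ker(\ell_k)$ by rank--nullity (using $\dim\qh_k=k+1$, $\dim\qh_{k-1}=k$). By Proposition \ref{pro:ini2} together with Proposition \ref{pronilint}, the polynomial first integrals of $\F_2$ form the algebra $\C[I_M]$, so $\dim\Ker(\ell_k)$ equals $1$ if $M\mid k-1$ and $0$ otherwise. Since $M\mid k-1$ iff $M\mid (k+M)-1$, the same count gives $\dim\Cor(\ell_{k+M})=\dim\Cor(\ell_k)$. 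Hence $I_M\Cor(\ell_k)$ has the right dimension and intersects $\Range(\ell_{k+M})$ trivially, so it is a valid choice of $\Cor(\ell_{k+M})$.

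The main obstacle I anticipate is ensuring that Lemma \ref{irredfm} is applicable without leaving out arithmetic exceptions; fortunately, because $\eta$ has degree $k+M-1$ and the exponents in play never exceed $p$, $q$ or $r$, the excluded values $p(k+M-1)/j$, $q(k+M-1)/j$, $r(k+M-1)/j$ for $1\le j\le p-1$ (resp.\ $q-1$, $r-1$) cannot equal $M=p+q+r$ by the sign argument above. Once this is in place, the whole argument is a clean interplay between the intertwining property $F_2\circ I_M=I_M\circ F_2$ and the classification of polynomial first integrals.
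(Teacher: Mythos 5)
Your proposal is correct and follows essentially the same route as the paper's own proof: the intertwining identity $F_2(I_M\eta)=I_MF_2(\eta)$, Lemma \ref{irredfm} (with the same arithmetic check that the excluded values exceed $M$ because $k\ge 2$) to pull the factor $I_M$ out of $\eta$ and land in $\Range(\ell_k)\cap\Cor(\ell_k)=\{0\}$, plus the rank--nullity dimension count via the classification of homogeneous first integrals. The only cosmetic differences are that you make the coprimality step $\langle x^p\rangle\cap\langle y^q\rangle\cap\langle(x-y)^r\rangle=\langle I_M\rangle$ explicit and phrase the intersection argument directly rather than by \emph{reductio ad absurdum}.
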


\begin{proof}
We first see that both subspaces have the same dimension. Indeed, by  Lemma \ref{irredfm}, $Ker(\ell_k)= \langle I_M^l \rangle$ if $k-1=lM$. Otherwise, $Ker(\ell_k)=\{0\}.$ Thus,
$\dim(\Cor(\ell_{k}))=2$ if $k=lM$ and $\dim(\Cor(\ell_{k}))=1,$ otherwise; i.e.,
$\dim(\Cor(\ell_k))=\dim(\Cor(\ell_{k+M})).$\\

For completing the proof it is enough to prove that $I_M \Cor(\ell_k) \subset \Cor(\ell_{k+M})$ or
equivalently that $I_M\Cor(\ell_{k})\cap
\Range(\ell_{k+M})=\{0\}$ by \emph{reductio ad absurdum}.
 Let $p_k\in\Cor\parent{\Opl_k}\setminus\llave{0}$ such that $p_kI_M\in\Range\parent{\Opl_{k+M}}$, then there exists $p_{k+M-1}\in\qh_{k+M-1}^\t\setminus\llave{0}$ 
 such that $\Opl_{k+M}(p_{k+M-1})=p_kI_M$, that is, $\Opl_{k+M}(p_{k+M-1})$ is multiple of $I_M$. As $\frac{p(k+M-1)}{j}>\frac{pM}{j}>M,\ j=1, \dots, p-1$; $\frac{q(k+M-1)}{j}>M,\ j=1, \dots, q-1$; $\frac{r(k+M-1)}{j}>M,\ j=1, \dots, r-1,$   by applying Lemma \ref{irredfm} we have that $p_{k+M-1}=p_{k-1}I_M$ with $p_{k-1}\in\qh_{k-1}^\t\setminus\llave{0}$ and consequently
\bean p_kI_M&=&F_2(p_{k+M-1})=F_2(p_{k-1}I_M)=I_MF_2(p_{k-1}).\eean
Hence $p_k=F_2(p_{k-1})$, that is, $p_k\in\Range\parent{\Opl_k}\cap\Cor\parent{\Opl_k}$ which gives a contradiction.
\end{proof}

Next result provides an orbital normal form of vector field
whose first homogeneous component is integrable and quadratic Lotka-Volterra type.
This normal form depends on the first integral of the first homogeneous component of the vector field and it is a suitable normal form for the applications.

\begin{theorem}\label{NFhfsimples} 
Vector field $\F=\F_2+\mbox{h.o.t.}$ with $\F_2=(x(-qx+(q+r)y),y((p+r)x-py))^T,\ p,q,r$ natural numbers and $M=p+q+r$ is orbitally equivalent to 
$$\dot{\x}=\F_2+\sum_{j=2}^{M+1}\eta_j^{(0)}\D+\sum_{i=1}^{\infty}\sum_{j=2}^{M+1}\eta_j^{(i)}I_M^i\D,$$
with $\eta_j^{(i)}\in\Cor(\ell_j)$ and $I_M=x^py^q(x-y)^r.$
\end{theorem}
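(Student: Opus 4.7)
The plan is to combine the orbital normal form of Theorem \ref{teoFNlequivFn} with the cyclicity of co-ranges established in Lemma \ref{ciclicidadl}. By Lemma \ref{lebasepkt}, $\Ker(\Opl_{k+3}^{\co}) = \{0\}$ for every $k \in \mathbb{N}$, so the hypothesis of Theorem \ref{teoFNlequivFn} is satisfied and $\F$ is orbitally equivalent to
$$\F_2 + \sum_{j>2}\bigl(\X_{g_{j+1}} + \eta_{j-1}\D\bigr),$$
with $g_{j+1} \in \Cor(\Opl_{j+1}^{\co})$ and $\eta_{j-1} \in \Cor(\Opl_{j-1})$. The same lemma also gives $\Cor(\Opl_{k+3}^{\co}) = \{0\}$, so every $g_{j+1}$ vanishes. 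Reindexing $m = j-1$ leaves the intermediate normal form
$$\F_2 + \sum_{m \geq 2} \eta_m \D, \qquad \eta_m \in \Cor(\Opl_m).$$

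Next, I would apply Lemma \ref{ciclicidadl}, which guarantees that one may choose the complements $\Cor(\Opl_k)$ compatibly so that $\Cor(\Opl_{k+M}) = I_M\,\Cor(\Opl_k)$ for every $k \geq 2$. Every integer $m \geq 2$ admits a unique representation $m = j + iM$ with $j \in \{2, 3, \ldots, M+1\}$ and $i \in \Nzero$ (just apply Euclidean division to $m-2$). Iterating the cyclicity identity $i$ times yields $\Cor(\Opl_m) = I_M^{\,i}\,\Cor(\Opl_j)$, so each $\eta_m$ can be written as $I_M^{\,i}\eta_j^{(i)}$ with $\eta_j^{(i)} \in \Cor(\Opl_j)$. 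Splitting off the $i = 0$ block from the terms with $i \geq 1$ and substituting into the intermediate normal form produces precisely the expression in the statement.

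The substantive technical content is already provided by Lemmas \ref{lebasepkt} and \ref{ciclicidadl}, so the proof itself is essentially a bookkeeping argument. The only subtlety is that the complements $\Cor(\Opl_m)$ must be chosen coherently across all $m \geq 2$; this is achieved by fixing an arbitrary complement $\Cor(\Opl_j)$ for each $j \in \{2, \ldots, M+1\}$ and then declaring $\Cor(\Opl_{j+iM}) := I_M^{\,i}\,\Cor(\Opl_j)$ for $i \geq 1$, a choice which Lemma \ref{ciclicidadl} shows to be a legitimate complementary subspace to $\Range(\Opl_{j+iM})$.
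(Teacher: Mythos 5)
Your proof is correct and follows essentially the same route as the paper: Theorem \ref{teoFNlequivFn} together with Lemma \ref{lebasepkt} yields the intermediate form $\F_2+\sum_{m\ge 2}\eta_m\D$, and Lemma \ref{ciclicidadl} is then iterated to rewrite every $\Cor(\Opl_{j+iM})$ as $I_M^{\,i}\Cor(\Opl_j)$ for $j\in\{2,\dots,M+1\}$. The paper's proof is just a two-sentence version of this; your reindexing via $m=j+iM$ and the remark on choosing the complements coherently are exactly the bookkeeping the authors leave implicit.
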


\begin{proof}
Applying Theorem \ref{teoFNlequivFn} and Lemma \ref{lebasepkt}, we can assert that $\F$ is orbital equivalent to $\F_2+\sum_{j\ge 2}\eta_j\D$ with $\eta_j\in\Cor\parent{\Opl_j}$.
In order to finish the proof it is sufficient to apply Lemma \ref{ciclicidadl} for the components of the normal form of degree greater than $M+1$.
\end{proof}

\section{Main results}\label{sec:main}
Our purpose is to characterize the analytic integrable vector fields which are perturbations of quadratic Lotka-Volterra type. For that, we will assume that the lowest degree component of the vector field satisfies the necessary condition of analytic integrability given in Proposition \ref{pronilint}, i.e. we deal with the vector field $\F=\F_2+\mbox{h.o.t.}$ with $\F_2=(x(-qx+(q+r)y),y((p+r)x-py))^T,\ p,q,r\in\Natural,\ \mbox{gcd}(p,q,r)=1.$ 

We give the main result of our study. It solves the analytic integrability problem for vector fields which are perturbations of quadratic Lotka-Volterra  vector fields whose first component is polynomially integrable. It also gives the expression of a first integral.
\begin{theorem}\label{main}
Let $\F=\F_2+\mbox{h.o.t.}$ be with $\F_2=(x(-qx+(q+r)y),y((p+r)x-py))^T,\ p,q,r\in\Natural,\ \mbox{gcd}(p,q,r)=1.$
The vector field $\F$ is analytically integrable if, and only if, it is orbitally equivalent to $\F_2.$\\
Moreover, in such a case, $\F$ has an analytic first integral of the form $I=I_M+\mbox{h.o.t.}$ being $I_M=x^py^q(x-y)^r$ a primitive first integral of $\F_2.$ 
\end{theorem}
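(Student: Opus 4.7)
The plan is to exploit the orbital normal form given by Theorem \ref{NFhfsimples}. The \emph{if} direction is immediate: if $\F$ is orbitally equivalent to $\F_2$ through a near-identity change $\Phi$, then the pullback of the polynomial first integral $I_M = x^p y^q (x-y)^r$ of $\F_2$ produces an analytic first integral of $\F$ whose lowest-degree homogeneous component is exactly $I_M$.

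For the \emph{only if} direction I first replace $\F$ by its normal form
\[
\G = \F_2 + \sum_{j=2}^{M+1}\eta_j^{(0)}\D + \sum_{i\ge 1}\sum_{j=2}^{M+1}\eta_j^{(i)} I_M^i\D, \qquad \eta_j^{(i)}\in\Cor(\ell_j),
\]
and aim to prove that analytic integrability of $\G$ forces every $\eta_j^{(i)}$ to vanish. Since $I_M$ is the primitive first integral of $\F_2$ (Proposition \ref{pronilint}), any analytic first integral of $\G$ must take the form $I = I_M^s + \sum_{k>Ms} I_k$ for some $s\ge 1$, with $I_k\in\qh_k$.

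The argument is by minimal counterexample on the degree. Assume some $\eta_j^{(i)}$ is nonzero and let $k_0\ge 3$ be the smallest degree for which $\G_{k_0}\ne 0$; write $\G_{k_0} = \eta\D$ with $\eta = \eta_{j_0}^{(i_0)} I_M^{i_0}$, $\eta_{j_0}^{(i_0)}\in\Cor(\ell_{j_0})\setminus\{0\}$, and $j_0 + Mi_0 = k_0-1$. Collecting the terms of total degree $Ms+k_0-1$ in the identity $G(I)=0$, only two contributions survive: $F_2(I_{Ms+k_0-2})$, from the higher-order part of $I$ against $\F_2$, and $(\eta\D)(I_M^s)= sM\,\eta I_M^s$, obtained via Euler's identity $D(I_M^s)=sM I_M^s$. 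Higher-degree components of $\G$ applied to any piece of $I$, and $\eta\D$ applied to the strictly higher-order part of $I$, contribute only beyond this degree. This yields
\[
\ell_{Ms+k_0-1}(I_{Ms+k_0-2}) = -sM\,\eta_{j_0}^{(i_0)} I_M^{i_0+s},
\]
placing $\eta_{j_0}^{(i_0)} I_M^{i_0+s}$ in $\Range(\ell_{Ms+k_0-1})$.

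The main obstacle, and the place where the cyclicity machinery pays off, is to rule out this range membership. Iterating Lemma \ref{ciclicidadl} gives $\Cor(\ell_{j_0 + M(i_0+s)}) = I_M^{i_0+s}\Cor(\ell_{j_0})$, and since $j_0+M(i_0+s) = Ms + k_0 - 1$, the element $\eta_{j_0}^{(i_0)} I_M^{i_0+s}$ also lies in $\Cor(\ell_{Ms+k_0-1})$. Hence it belongs to the intersection of the chosen complementary corange with the range, which is $\{0\}$; therefore $\eta_{j_0}^{(i_0)} I_M^{i_0+s}=0$, and since $sM\ne 0$ and $I_M\ne 0$, we obtain $\eta_{j_0}^{(i_0)}=0$, contradicting the choice of $k_0$. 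Consequently $\G=\F_2$, so $\F$ is orbitally equivalent to $\F_2$; the ``moreover'' part then follows from the \emph{if} direction applied to this equivalence.
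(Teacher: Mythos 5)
Your proposal is correct and follows essentially the same route as the paper: reduce to the dissipative normal form $\F_2+\sum\eta_j\D$ (you invoke Theorem \ref{NFhfsimples}, the paper uses Theorem \ref{teoFNlequivFn} with Lemma \ref{lebasepkt}, but these yield the same structure), then show that the lowest nonzero perturbation term multiplied by $I_M^s$ lies simultaneously in $\Cor(\ell_{Ms+k_0-1})$ (via the cyclicity Lemma \ref{ciclicidadl}) and in $\Range(\ell_{Ms+k_0-1})$, forcing it to vanish. The degree bookkeeping and the identification of the lowest-order term of the first integral as a power of $I_M$ match the paper's argument.
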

\begin{proof}
We see the sufficiency. The polynomial is $I_M=x^py^q(y-x)^r$ is a first integral of $\F_{2}$ which it is transformed into a formal first integral $I=I_M+\mbox{h.o.t.}$ of $\F$ and from [Theorem A,\cite{Mattei80}] $\F$ is analytically integrable. \\
We see the necessity of the condition. 
Applying Theorem \ref{teoFNlequivFn} and Lemma \ref{lebasepkt}, we can assert that $\F$ is orbital equivalent to $\G=\F_2+\sum_{j\ge 2}\eta_j\D$ with $\eta_j\in\Cor\parent{\Opl_j}$. \\
Let note that $\F$ has an analytic first integral equivalents to $\G$ has a formal first integral. Assume that $\G$ is formally integrable and not all the $\eta_j$ are zero. Let $N$ defined by
$N=\min\llave{j>1 : \eta_j\not\equiv 0}$. A formal first integral of $\G$ is of the form  $I=I_M^l+\sum_{j>Ml}I_j$ with $I_j\in\qh_j$. Imposing the integrability condition we have
\bean 0&=&(G(I))_{N+Ml}=(\eta_{N}D)(I_M^l)+F_2(I_{Ml+N-1})\\
&=&Ml\eta_{N}I_M^l+\Opl_{Ml+N}\parent{I_{Ml+N-1}}.\eean
But this equation is incompatible since by Lemma \ref{ciclicidadl} $Ml\eta_NI_M^l\in\Cor\parent{\Opl_{Ml+N}}$ and $\Opl_{Ml+N}\parent{I_{Ml+N-1}}=-Ml\eta_NI_M^l\in\Range\parent{\Opl_{Ml+N}}$ which is a contradiction.
Consequently, $\G=\F_2,$ i.e. $\F$ is orbitally equivalent to $\F_2.$

We now see the second part. First integrals of $\F_2$ are  $\Psi(I_M)$ for any formal function $\Psi$. So, first integrals of $\F$ are $\Psi(I_M+\mbox{h.o.t.})$ since $\F$ is orbitally equivalent to $\F_2$. Thus, $I_M+\mbox{h.o.t.}$ is also a first integral of $\F$. 
\end{proof}

The following theorem characterizes the analytic integrability of a vector field whose first homogeneous component is quadratic Lotka-Volterra type through the existence of a Lie symmetry.

\begin{theorem}\label{main2}
Let $\F=\F_2+\mbox{h.o.t.}$ be with $\F_2=(x(-qx+(q+r)y),y((p+r)x-py))^T,\ p,q,r\in\Natural,\ \mbox{gcd}(p,q,r)=1.$
Then $\F$ is analytically integrable if, and only if, there exist a formal vector field $\G=\sum_{j\geq 1}\G_j$, $\G_j\in\QH_j$, $\G_1=(x,y)^T$ and a formal scalar function $\nu$, $\nu(\zero)=1$ such that $[\F,\G]=\nu\F$, i.e. $\F$ has a Lie symmetry.
\end{theorem}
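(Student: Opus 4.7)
The plan is to use Theorem \ref{main} as a bridge—analytic integrability is equivalent to orbital equivalence to $\F_2$, so the statement reduces to showing the Lie-symmetry hypothesis is itself equivalent to that orbital equivalence. For \textbf{necessity} $(\Rightarrow)$: if $\F$ is analytically integrable, Theorem \ref{main} provides a near-identity $\Phi$ and an analytic scalar $\mu_0$, $\mu_0(\zero)\neq 0$, with $\Phi_*\F=\mu_0\F_2$. Since $\F_2$ is $2$-homogeneous, $\D$ is a Lie symmetry of $\F_2$ (with $[\F_2,\D]$ proportional to $\F_2$), and the Leibniz identity applied to $[\mu_0\F_2,\D]$ shows that $\D$ remains a Lie symmetry of $\mu_0\F_2$, with cofactor of value $1$ at the origin. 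The pullback $\G:=\Phi^{-1}_*\D$ then has linear part $\D$ (since $D\Phi(\zero)=I$) and satisfies $[\F,\G]=\nu\F$ with $\nu(\zero)=1$.

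For \textbf{sufficiency} $(\Leftarrow)$: Lie-symmetry existence transfers along orbital equivalences (with $\nu$ rescaled accordingly), so by Theorem \ref{NFhfsimples} we may assume that
\[
\F=\F_2+\sum_{k\ge 3}\beta_k\D,
\]
with each $\beta_k\in\Cor(\ell_j)\cdot I_M^i$ for the appropriate $j,i$. I will prove by strong induction that $\beta_k=0$ for every $k$, so that $\F=\F_2$ and Theorem \ref{main} concludes. Expanding $\G=\D+\sum_{k\ge 2}\G_k$, $\nu=1+\sum_{k\ge 1}\nu_k$, and comparing the degree-$k$ components of $[\F,\G]=\nu\F$, the induction hypothesis $\beta_j=0$ for $3\le j<k$ reduces that identity to
\[
[\F_2,\G_{k-1}] = c_k\,\beta_k\,\D+\nu_{k-2}\F_2,\qquad c_k\neq 0.
\]

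The core step is to split this identity into its Hamiltonian and dissipative components via the decomposition $\QH_k=\X_{\qh_{k+1}}\oplus\qh_{k-1}\D$ of Proposition 2.7, writing $\G_{k-1}=\X_{g_k}+\xi_{k-2}\D$. The Hamiltonian projection produces an equation equivalent to $\ell^{\co}_{k+1}(g_k)$ being a specific multiple of $h$ depending on $\xi_{k-2}$ and $\nu_{k-2}$; by Lemma \ref{lebasepkt}, $\ell^{\co}_{k+1}$ is an isomorphism on $\Delta_k$, so $g_k$ is uniquely determined by a linear combination of $\xi_{k-2}$ and $\nu_{k-2}$. The dissipative projection has the form
\[
c_k\,\beta_k = F_2(\cdot)+R(g_k,\xi_{k-2},\nu_{k-2},\mu),
\]
whose first summand lies in $\Range(\ell_{k-1})$ by construction. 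After substituting the solved-for $g_k$ into $R$ and invoking the cyclicity $\Cor(\ell_{k+M})=I_M\Cor(\ell_k)$ from Lemma \ref{ciclicidadl} together with the divisibility control of Lemma \ref{irredfm}, the remainder $R$ is shown to lie in $\Range(\ell_{k-1})$ as well. Hence $c_k\,\beta_k\in\Range(\ell_{k-1})\cap\Cor(\ell_{k-1})=\{0\}$, so $\beta_k=0$ and the induction closes.

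The main obstacle is this last assertion: that $R$ belongs to $\Range(\ell_{k-1})$ after the Hamiltonian substitution. This requires careful bookkeeping of how the inversion of $\ell^{\co}_{k+1}$ interacts with the irreducible factorization $I_M=x^py^q(x-y)^r$, using Lemma \ref{irredfm} to control divisibility by the factors $x$, $y$, $x-y$ and Lemma \ref{ciclicidadl} to align the $I_M$-cyclic structure of $\Cor(\ell_{k-1})$ with the $g_k$- and $\mu$-dependent contributions.
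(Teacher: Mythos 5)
Your necessity direction is sound and essentially complete: pulling back the dissipative field $\D$ (which is a Lie symmetry of $\mu_0\F_2$ with cofactor equal to $1$ at the origin, by Euler's identity and the Leibniz rule) through the orbital equivalence supplied by Theorem \ref{main} does produce the required $\G$ with linear part $\D$. The problem is the sufficiency direction. There your argument is a programme rather than a proof: the entire content of the induction is concentrated in the claim that, after inverting $\Opl^{\co}_{k+1}$ to solve for $g_k$ and substituting back, the remainder $R$ lands in $\Range(\Opl_{k-1})$, so that $\beta_k\in\Range(\Opl_{k-1})\cap\Cor(\Opl_{k-1})=\{0\}$. You assert this ("the remainder $R$ is shown to lie in $\Range(\ell_{k-1})$") and then immediately concede it is "the main obstacle" requiring bookkeeping you do not carry out. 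Nothing in Lemmas \ref{irredfm} or \ref{ciclicidadl} delivers this containment automatically: those lemmas control divisibility by $x,y,x-y$ and the $I_M$-cyclicity of the co-ranges, but the term $R$ also involves $\mu$, $\nu_{k-2}$ and the Hamiltonian part $\X_{g_k}$ of the symmetry, whose interaction with the splitting $\QH_k=\X_{\qh_{k+1}}\oplus\qh_{k-1}\D$ is precisely what has to be computed. As written, the key step is missing, so the proof is incomplete. (For comparison, the paper disposes of the whole theorem by quoting \cite[Theorem 1.3]{Algaba09_like}, which is exactly the general statement your induction is trying to reprove.)

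There is, however, a short complete route to sufficiency using only results already proved in this paper, and it avoids the induction entirely. If $[\F,\G]=\nu\F$ with $\G=\D+\mbox{h.o.t.}$, then the classical transversality argument shows that $V:=\F\wedge\G$ is a (formal) inverse integrating factor of $\F$. Its lowest-degree term is $\F_2\wedge\D=-\D\wedge\F_2=-3h=-(p+q+r)\,xy(x-y)$, so after dividing by the nonzero constant $-(p+q+r)$ one gets an inverse integrating factor of the form $xy(x-y)+\mbox{h.o.t.}$, and Theorem \ref{main3} (whose proof does not depend on Theorem \ref{main2}) yields analytic integrability. I would recommend replacing your inductive sketch by this argument, or else actually carrying out the range computation you defer.
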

The proof of Theorem \ref{main2}  follows from Theorem \ref{main} and applying \cite[Theorem 1.3]{Algaba09_like}.\\

We solve the analytic integrability problem through the existence of a formal inverse integrating factor. 

\begin{theorem}\label{main3}
Let $\F=\F_2+\mbox{h.o.t.}$ be with $\F_2=(x(-qx+(q+r)y),y((p+r)x-py))^T,\ p,q,r\in\Natural,\ \mbox{gcd}(p,q,r)=1.$
Then $\F$ is analytically integrable if, and only if, it has a formal inverse integrating factor of the form  $V=xy(x-y)+\mbox{h.o.t.}.$ 
\end{theorem}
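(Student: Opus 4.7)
My plan is to use Theorem \ref{main} as the pivot: I show that existence of a formal inverse integrating factor of the form $V=xy(x-y)+\mbox{h.o.t.}$ is equivalent to orbital equivalence of $\F$ with $\F_2$, which by Theorem \ref{main} is the same as analytic integrability.

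For the \emph{necessity}, a direct check gives $F_2(V_3)=3\mu V_3$ where $V_3:=xy(x-y)=\frac{3}{M}h$, so $V_3$ is already an IIF of $\F_2$. If $\F$ is analytically integrable then Theorem \ref{main} provides a formal orbital equivalence $\Phi^*\F=\nu\F_2$ with $\Phi$ tangent to the identity and $\nu(\zero)=1$. Since inverse integrating factors obey the transformation rule $V_{\Phi^*\F}(u,v)=V_\F(\Phi(u,v))/\det(D\Phi(u,v))$, pulling $V_3$ back to the original coordinates gives a formal IIF of $\F$ whose lowest-degree term is exactly $xy(x-y)$ (because $\det(D\Phi)\cdot\nu$ is a unit equal to $1$ at the origin). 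Alternatively, Theorem \ref{main2} yields a formal Lie symmetry $\G=\D+\mbox{h.o.t.}$, and then $V:=-\frac{1}{M}\det(\F,\G)$ is a formal IIF whose leading term is $-\frac{1}{M}\det(\F_2,\D)=xy(x-y)$.

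For the \emph{sufficiency}, I first use Theorem \ref{NFhfsimples} to replace $\F$ by its orbital normal form $\G=\F_2+\sum_{j}\tilde\eta_{j-1}\D$ with $\tilde\eta_{j-1}\in\Cor(\ell_{j-1})$ (via the cyclicity Lemma \ref{ciclicidadl}); the hypothesized IIF transports to an IIF of $\G$ with the same leading term $xy(x-y)$. Assume for contradiction $\G\neq\F_2$ and let $N\geq 3$ be the smallest index with $\tilde\eta_{N-1}\neq\zero$. Setting $\tilde\ell_k(W):=F_2(W)-3\mu W$ and expanding $G(V)=V\,\mbox{div}(\G)$ degree by degree, one sees that the equations in degrees $4,\dots,N+1$ all reduce to $\tilde\ell_{j}(V_{j})=0$ (admissible by picking $V_j$ in the kernel), whereas the degree-$N+2$ equation is
\[
\tilde\ell_{N+1}(V_{N+1})=(N-2)\,\tilde\eta_{N-1}\,V_3,\qquad V_3=xy(x-y).
\]

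The main obstacle is showing that this last equation has no solution. The key step is a twisted analogue of Lemma \ref{irredfm}: if $W\in\qh_{N+1}$ satisfies $\tilde\ell_{N+1}(W)\in\langle V_3\rangle$, then $W\in\langle V_3\rangle$. Since $\tilde\ell_{N+1}$ involves the extra $-3\mu W$ term, Lemma \ref{irredfm} does not apply directly; instead I test the relation modulo each of the three coprime factors $x$, $y$, $x-y$ of $V_3$. On each restriction $W$ collapses to a single monomial $c\cdot(\mbox{variable})^{N+1}$, and the vanishing of $\tilde\ell_{N+1}(W)$ modulo that factor forces a linear condition on $c$ whose coefficient is $p(1-N)-(q+r)$ (modulo $x$), $q(1-N)-(p+r)$ (modulo $y$), or $r(N-1)+p+q$ (modulo $x-y$), all nonzero for $N\geq 3$. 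Hence $V_3\mid W$; writing $W=V_3\tilde W$ with $\tilde W\in\qh_{N-2}$ and using $F_2(V_3)=3\mu V_3$, one factors $\tilde\ell_{N+1}(V_3\tilde W)=V_3\,\ell_{N-1}(\tilde W)$, so the degree-$N+2$ equation reduces to $\ell_{N-1}(\tilde W)=(N-2)\tilde\eta_{N-1}$. Since $N\geq 3$, this places $\tilde\eta_{N-1}\in\Range(\ell_{N-1})\cap\Cor(\ell_{N-1})=\{0\}$, contradicting $\tilde\eta_{N-1}\neq\zero$. Hence $\G=\F_2$, so $\F$ is orbitally equivalent to $\F_2$, and Theorem \ref{main} concludes.
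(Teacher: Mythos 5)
Your proposal is correct, and its skeleton coincides with the paper's: necessity is obtained exactly as in the paper (Theorem \ref{main} gives orbital equivalence to $\F_2$, whose inverse integrating factor $h=\frac{M}{3}xy(x-y)$ is pulled back), and sufficiency passes to the orbital normal form $\G=\F_2+\sum\eta_j\D$ and derives the contradiction that the first nonzero $\eta$ lies in $\Range(\Opl)\cap\Cor(\Opl)=\llave{0}$; your final equation $\Opl_{N-1}(\tilde W)=(N-2)\tilde\eta_{N-1}$ is literally the paper's $G_2(u_{j_0})=j_0\eta_{j_0+1}$ after reindexing. The one genuine difference is how the divisibility of the integrating factor by $xy(x-y)$ is justified. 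The paper argues globally: since the only invariant curves of $\G$ are $x$, $y$, $x-y$ and units, the factor must be $W=hu$ with $u(\zero)=1$, and substituting this form yields a clean recursion $0=G_2(u_i)-i\eta_{i+1}+\sum_k(2k-i)\eta_{i-k+1}u_k$ valid at every degree. You instead work only at the first critical degree and prove an ad hoc ``twisted'' analogue of Lemma \ref{irredfm} for $\tilde\Opl_{k}=F_2-3\mu$ by restricting to the three invariant lines and checking that the coefficients $p(N-1)+q+r$, $q(N-1)+p+r$, $r(N-1)+p+q$ are nonzero. Your computation of the degree-$(N+2)$ equation (the contribution $3\tilde\eta_{N-1}V_3-(N+1)\tilde\eta_{N-1}V_3=-(N-2)\tilde\eta_{N-1}V_3$ from $\tilde\eta_{N-1}\D$) checks out, and the identity $\tilde\Opl_{N+1}(V_3\tilde W)=V_3\Opl_{N-1}(\tilde W)$ follows from $F_2(V_3)=3\mu V_3$. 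What your version buys is self-containedness: the paper's step ``$W=hu$'' leans on the invariant-curve factorization without detailing it, whereas your line-restriction argument makes the divisibility explicit; what the paper's version buys is a uniform recursion at all degrees rather than just the first obstructed one (which is all that is actually needed).
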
 
\begin{proof}
We prove that the condition is necessary. We assume that $\F$ is analytically integrable. From Theorem \ref{main}, it is orbitally equivalent to $\F_2=\X_h+\mu\D$ being $h=\frac{p+q+r}{3}xy(x-y)$ and $\mu=\frac{1}{3}((-2q+p+r)x+(q+r-2p)y),$ which has the inverse integrating factor $h.$ Undoing the change, it has that $\F$ has a formal inverse integrating $V=h+\mbox{h.o.t.} .$\\

Now we will see the sufficiency of the condition. Let $V=h+\mbox{h.o.t.}$ a formal inverse integrating factor of $\F$. Since Theorem \ref{teoFNlequivFn} and Lemma \ref{lebasepkt}, we can assert that $\F$ is orbital equivalent to $\G=\F_2+\sum_{j\ge 2}\eta_j\D$ with
$\eta_j\in\Cor\parent{\Opl_j}$. Therefore, $\F$ has a formal inverse integrating factor if, and only if, $\G$ has it too. Moreover, the formal inverse integrating factor $W$ of  $\G$ is also of the form $W=h+\mbox{h.o.t.}.$ On the other hand, the unique invariant curves of $\G$ are $x,y,x-y$ and any $u$ formal with $u(\0)=1,\ u$ is an unit element. So, we get $W=hu$ being $u$ formal and $u(\0)=1.$ Equation  $G(W)-W\mbox{div}(\G)=0$ is
$$0=uG(h)+hG(u)-hu\mbox{div}(\G).$$
As $G(h)=3h\mu+	\sum_{j>2}3h\eta_j$ and $\mbox{div}(\G)=3\mu+\sum_{j>2}(j+2)\eta_j,$ it has that $$0=h(G(u)-u\sum_{j>2}(j-1)\eta_j).$$
Expanding $u=1+\sum_{i\ge 1}u_i,$ it is easy to prove that the equation to degree $i+1$ becomes
\begin{equation}\label{equfii} 0=G_2(u_i)-i\eta_{i+1}+\sum_{k=1}^{i-1}(2k-i)\eta_{i-k+1} u_k
\end{equation}
We see that  $\eta_j=0$ for all $j$. Indeed, otherwise, let $j_0=\min\llave{j\in\Natural : \eta_{j+1}\not\equiv 0}$. 
Equation (\ref{equfii}) to degree $j_0+1$ is 
$$G_2(u_{j_0})=j_0\eta_{j_0+1}-\sum_{k=1}^{j_0-1}(2k-j_0)\eta_{j_0-k+1} u_k.$$
As $\eta_{j_0-k+1}= 0$ for $1\le k\le j_0-1$, we get $G_2(u_{j_0})=j_0\eta_{j_0+1},$ i.e.
$\eta_{j_0+1}\in\Cor\parent{\Opl_{j_0+1}}$ and $\eta_{j_0+1}\in\Range\parent{\Opl_{j_0+1}}.$ We conclude that $\eta_{j_0+1}=0.$
\end{proof}


\section{An application}\label{sec:aplica}
Consider the analytic integrability problem of the following system
\begin{equation}\label{sis:aplica}\left(\begin{array}{c}\dot{x}\\ \dot{y}\end{array}\right)=\left(\begin{array}{c}x(3y-x)\\ y(3x-y)\end{array}\right)+\left(\begin{array}{c}x(a_{20}x^2+a_{11}xy+a_{02}y^2)\\ y(b_{20}x^2+b_{11}xy+b_{02}y^2)\end{array}\right).\end{equation}

The first homogeneous component of the vector field is $\F_2=(x(-x+3y),\ y(3x-y))^T,$ where $\F_2=\X_h+\mu \D$ with $h=\frac{4}{3}xy(x-y)$ and $\mu=\frac{1}{3}(x+y).$ The vector field $\F_2$ is polynomially integrable and a primitive first integral is $I_4=xy(x-y)^2.$

The following result solves the integrability problem for this family.
\begin{theorem}\label{thm:aplica} System (\ref{sis:aplica}) is analytically integrable if, and only if, 
one of the following conditions holds:\\
\noindent (1) $b_{11}+5b_{02}=b_{20}+2b_{02}=a_{11}+3b_{02}=a_{20}-b_{02}=a_{02}=0,$\\
\noindent (2) $b_{11}+3b_{02}=a_{02}+2b_{02}=a_{11}+5b_{02}=a_{20}-b_{02}=b_{20}=0,$\\
\noindent (3) $2a_{11}+2a_{02}-3b_{20}-3b_{11}-5b_{02}=a_{02}b_{20}+a_{02}b_{11}+3a_{02}b_{02}+2b_{20}b_{02}=2a_{20}+b_{20}+b_{11}+3b_{02}=0,$\\ 
\noindent (4)  $a_{02}+5b_{02}=a_{11}+b_{11}=5a_{20}+b_{20},$\\
\noindent (5) $a_{11}+b_{11}=a_{20}+b_{02}=a_{02}=b_{20},$\\
\noindent (6) $b_{20}-b_{02}=a_{02}+b_{02}=a_{11}+b_{11}=a_{20}+b_{02}.$
\end{theorem}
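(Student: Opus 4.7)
The strategy is to apply Theorem \ref{main}: since the quadratic part of (\ref{sis:aplica}) is $\F_2=(x(-x+3y),y(3x-y))^T$, corresponding to $p=q=1$, $r=2$, we have $M=4$ and primitive first integral $I_4=xy(x-y)^2$. Thus (\ref{sis:aplica}) is analytically integrable if and only if it is orbitally equivalent to $\F_2$, and by Theorem \ref{NFhfsimples} this happens exactly when every coefficient of the orbital normal form
\[
\F_2 + \sum_{j=2}^{5}\eta_j^{(0)}\D + \sum_{i\ge 1}\sum_{j=2}^{5}\eta_j^{(i)}\, I_4^{\,i}\, \D,\qquad \eta_j^{(i)}\in\Cor(\ell_j),
\]
vanishes.

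For necessity, I would compute the normal form recursively. The perturbation of $\F_2$ starts at degree $3$, so the obstructions appear successively as $\eta_2^{(0)},\eta_3^{(0)},\eta_4^{(0)},\eta_5^{(0)},\eta_2^{(1)},\ldots$, each being an explicit polynomial in the six parameters $a_{20},a_{11},a_{02},b_{20},b_{11},b_{02}$. Setting them to zero generates an ideal $\mathcal{I}\subset\mathbb{C}[a_{20},\ldots,b_{02}]$, and I would compute its primary decomposition with a computer algebra system; the claim is that this yields exactly the six components listed. For sufficiency, I would in each case exhibit either an explicit analytic first integral via a Darboux construction using invariant curves that deform the three lines $x=0$, $y=0$, $x-y=0$ of $\F_2$, an inverse integrating factor of the form $V=xy(x-y)+\mbox{h.o.t.}$ (then invoke Theorem \ref{main3}), or a Lie symmetry $\G=\D+\mbox{h.o.t.}$ with $[\F,\G]=\nu\F$ (then invoke Theorem \ref{main2}). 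The involution $(x,y)\leftrightarrow(y,x)$ interchanges cases (1) and (2), reducing the number of independent sufficiency constructions, and in cases (5)-(6) the conditions force $a_{02}=b_{20}=0$, so (\ref{sis:aplica}) becomes semi-triangular and can be integrated by elementary manipulations.

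The main obstacle is controlling the necessity computation. Although Lemma \ref{ciclicidadl} implies that the obstructions $\eta_j^{(i)}$ at degree $j+4i>9$ behave cyclically, each new $\eta_j^{(i)}$ still produces a new polynomial constraint on the parameters, so one must run the recursion far enough to be certain the ideal $\mathcal{I}$ has stabilized and that no further components are hiding at higher degrees. The corresponding Gr\"obner basis and primary decomposition in six variables are nontrivial, and must be cross-checked against the explicit first integrals produced on the sufficiency side to confirm that the six listed components are both complete and mutually irredundant, which is the heart of the proof.
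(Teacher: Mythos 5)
Your overall strategy coincides with the paper's: use Theorem \ref{NFhfsimples} to reduce necessity to the vanishing of the normal form coefficients (the paper computes them up to order $7$, namely $\alpha_2,\alpha_3,\alpha_4,\alpha_5,\beta_5,\alpha_6$, and their vanishing yields the six components), and establish sufficiency case by case via explicit first integrals, the involution $(x,y)\leftrightarrow(y,x)$ relating cases (1) and (2), and inverse integrating factors combined with Theorem \ref{main3}. Your remark that the sufficiency side certifies that the ideal of obstructions has stabilized is exactly how the paper closes the necessity computation.

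There is, however, a concrete gap in your treatment of case (6). The conditions there read $b_{20}-b_{02}=a_{02}+b_{02}=a_{11}+b_{11}=a_{20}+b_{02}=0$, i.e.\ $b_{20}=b_{02}$ and $a_{02}=-b_{02}$, which are \emph{not} zero unless $b_{02}=0$; so your claim that cases (5)--(6) force $a_{02}=b_{20}=0$ and reduce to a semi-triangular system is false for case (6) (it is correct for case (5)). Case (6) is precisely where the paper needs its most delicate argument: except in the subcase $b_{11}=-2b_{02}$, no closed-form inverse integrating factor starting with $h$ is available, and one must instead prove the \emph{existence} of a formal invariant curve $C=x-y+\mbox{h.o.t.}$ with cofactor $-(x+y)(1+2b_{02}x-2b_{02}y)$ by an explicit degree-by-degree induction on the shape of its homogeneous components (Lemma \ref{lem:case6}), and then check that $V=xyC$ satisfies $F(V)=V\,\mbox{div}(\F)$ so that Theorem \ref{main3} applies. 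Without an argument of this kind your sufficiency proof for case (6) does not go through, and the ``if'' direction of the theorem remains unproved for that component.
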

\begin{proof} To prove the necessary condition, it has computed the first coefficients of the normal form given in Theorem \ref{NFhfsimples}. By Theorem \ref{main}, the vanishing of the coefficients leads us to the integrability.  In this case, it has been necessary the coefficients of the normal form up order 7, 
$$(\dot{x},\dot{y})^T=\F_2+(\alpha_2x^2+\alpha_3x^3+\alpha_4x^4+\alpha_5xI_4+\beta_5yI_4+
\alpha_6x^2I_4)(x,y)^T.$$
The coefficients  $\alpha_2,\alpha_3,\alpha_4,\alpha_5,\alpha_6$ and $\beta_5$ are polynomials too long, so we do not given them here. Their vanishing arrives to systems (\ref{sis:aplica})  for cases 1--6.

We prove the sufficiency. 
\noindent System (\ref{sis:aplica}) for case 1 has an analytic first integral 
$xy(x-y-b_{02}x^2+\frac{1}{3}b_{02}xy)^2(1-b_{02}x-b_{02}y)^{-3}.$\\
\noindent System (\ref{sis:aplica}) for case 2 is transformed into system (\ref{sis:aplica}) for case 1 by using the involution $(x,y)\leftrightarrow(y,x).$\\
\noindent System (\ref{sis:aplica}) for case 3 has an inverse integrating factor 
$xy(x-y)(2+b_{20}x+b_{11}x+3b_{02}x-2b_{02}y)$ whose first component is $h$. Applying Theorem \ref{main3}, the vector field is analytically integrable. \\
\noindent System (\ref{sis:aplica}) for case 4 has a polynomial first integral
$xy(3x-3y-3a_{20}x^2+b_{11}xy+3b_{02}y^2)^2.$\\
\noindent System (\ref{sis:aplica}) for case 5 has an analytic first integral
$$xy(3x-3y+(b_{02}+b_{11})xy)^2(2+2b_{02}x-2b_{02}y+(b_{02}b_{11}+b_{02}^2)xy)^{-3}.$$
\noindent System (\ref{sis:aplica}) in the case 6, for $b_{11}=-2b_{02},$ has an inverse integrating factor
$xy(x-y)(1+b_{02}x-b_{02}y).$ Otherwise, we have not found the expression of an inverse integrating factor starting by $h$. In this case, we center on proving its existence in order to apply Theorem \ref{main3}.\\
Consider $V=xyC(x,y)$ with $C$ the invariant curve given by Lemma \ref{lem:case6}. It has that 
$F(V)=xyF(C)+xCF(y)+yCF(x)=V(K^{(1)}+K^{(2)}+K^{(3)}) $ with $K^{(1)},K^{(2)}$ and $K^{(3)}$ the cofactors of $x,y$ and $C$, respectively, $K^{(1)}=-x+3y-b_{02}x^2-b_{11}xy-b_{02}y^2,\ K^{(2)}=3x-y+b_{02}x^2+b_{11}xy+b_{02}y^2$ and $K^{(3)}=-(x+y)(1+2b_{02}x-2b_{02}y)$ and as $K^{(1)}+K^{(2)}+K^{(3)}=x+y-2b_{02}x^2+2b_{02}y^2=\mbox{div}(\F),$  $V$ is an inverse integrating factor of $\F$. This concludes the proof. 
\end{proof}
\begin{lemma}\label{lem:case6}System (\ref{sis:aplica}) for case 6 has an invariant curve $C=x-y+\mbox{h.o.t.}$ with cofactor $K=-(x+y)(1+2b_{02}x-2b_{02}y).$
\end{lemma}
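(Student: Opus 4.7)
The plan is to construct $C$ as a formal power series $C=(x-y)+\sum_{k\ge 2}C_k$, $C_k\in\qh_k$, by solving $F(C)=KC$ order by order. Writing $K=K_1+K_2$ with $K_1=-(x+y)$ and $K_2=-2b_{02}(x^2-y^2)$, the equation at homogeneous degree $k+1$ becomes
\[
L_k(C_k) := F_2(C_k)+(x+y)C_k \;=\; R_k := -F_3(C_{k-1})-2b_{02}(x^2-y^2)C_{k-1},
\]
with the convention $C_0:=0$. The base case $k=1$ is immediate from $F_2(x-y)=-(x+y)(x-y)$, and in the subcase $b_{11}=-2b_{02}$ the explicit closed form $C=(x-y)(1+b_{02}(x-y))$ already satisfies the equation (direct check using $g=b_{02}(x-y)^2$).

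The main structural simplification in case 6 is the factorization $F_3(\phi)=g\cdot G(\phi)$ with $g=b_{02}(x^2+y^2)+b_{11}xy$ and $G:=-x\partial_x+y\partial_y$. Under the involution $\sigma:(x,y)\leftrightarrow(y,x)$ one directly checks $\sigma^*\F_2=\F_2$, $\sigma^*\F_3=-\F_3$, $\sigma^*K_1=K_1$, $\sigma^*K_2=-K_2$; hence $L_k$ is $\sigma$-equivariant. Starting from the $\sigma$-antisymmetric $C_1=x-y$, an easy induction shows $R_k$ has $\sigma$-parity $(-1)^k$, so we look for $C_k$ in the parity-$(-1)^k$ subspace of $\qh_k$, on which $L_k$ restricts to a linear map between spaces of the same dimension.

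For indices $k\not\equiv 1\pmod 4$ this restricted operator is an isomorphism. Indeed, any $\phi\in\Ker(L_k)$ is a homogeneous invariant curve of $\F_2$ with cofactor exactly $-(x+y)$, and combined with Propositions \ref{pro:ini1}--\ref{pro:ini2} the cofactor-matching constraint forces $\phi$ to be a scalar multiple of $x^ay^a(x-y)^{2a+1}=I_M^a(x-y)$, a polynomial of degree $4a+1$. Hence $C_k$ is uniquely determined in the non-resonant cases. At the resonant orders $k=4a+1$, $\Ker(L_k^{\mathrm{anti}})$ and $\Cor(L_k^{\mathrm{anti}})$ are both one-dimensional, leaving a single scalar obstruction that $R_k$ must satisfy.

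The main obstacle is verifying that this obstruction vanishes at every resonant order. I plan to pin down the cokernel functional explicitly through the dual bidiagonal recursion of $L_k$ in the antisymmetric monomial basis of $\qh_k$, and then evaluate it on $R_k=-g\,G(C_{k-1})-2b_{02}(x^2-y^2)C_{k-1}$. The vanishing should follow from the $\sigma$-parity already imposed on $C_{k-1}$, the $G$-weight decomposition of $R_k$ that comes from $\F_3=g\,G$, and a cyclicity relation $\Cor(L_{k+M})=I_M\,\Cor(L_k)$ for $M=4$ analogous to Lemma \ref{ciclicidadl}. Once $C$ is built to all orders, its cofactor equals $K$ by construction of the equations, completing the proof.
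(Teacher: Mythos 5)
Your setup is sound: the order-by-order equation, the $\sigma$-parity bookkeeping, the dimension count showing $L_k$ restricted to the parity-$(-1)^k$ sector is square, and the identification of the kernel (hence a one-dimensional cokernel) exactly at the resonant degrees $k=4a+1$, where the kernel is spanned by $x^ay^a(x-y)^{2a+1}=I_4^a(x-y)$, are all correct. But the proof has a genuine gap at precisely the point you flag as ``the main obstacle'': you never actually show that the scalar obstruction vanishes at the resonant degrees $k=5,9,13,\dots$. The phrases ``I plan to pin down the cokernel functional'' and ``the vanishing should follow from'' leave unproved the one claim that constitutes the entire content of the lemma --- for a generic cubic perturbation of $\F_2$ such an obstruction would \emph{not} vanish, and that is exactly why only the six cases of Theorem \ref{thm:aplica} are integrable. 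Parity cannot rescue you: at $k=4a+1$ the kernel element $I_4^a(x-y)$ is itself antisymmetric, so the obstruction lives in the very sector in which you are solving. Nor does Lemma \ref{ciclicidadl} apply as stated, since it concerns the operator $\ell_k(\eta)=F_2(\eta)$, not the shifted operator $L_k=F_2+(x+y)\,\cdot\,$; transporting the cyclicity relation to $L_k$ would itself require an argument you have not given.

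The paper avoids the obstruction analysis altogether by guessing a sharp closed form for each homogeneous term, $C_{2j-1}=A_{2j-1}(xy)^{j-1}(x-y)$ and $C_{2j}=(xy)^{j-1}(A_{2j}x^2+B_{2j}xy+A_{2j}y^2)$ (the displayed ansatz (\ref{expC}) has a sign slip in the $y^2$ coefficient, as the computed $C_2=b_{02}x^2+\frac{1}{3}(b_{11}-4b_{02})xy+b_{02}y^2$ shows), and then verifying by a two-step induction that the right-hand side at each order is an explicit multiple of a single explicit polynomial admitting an explicit preimage of the same shape. Solvability is thus demonstrated constructively at every order, resonant or not. To complete your argument you would need to do the equivalent work: either exhibit the cokernel functional of $L_{4a+1}^{\mathrm{anti}}$ and compute its value on $R_{4a+1}$ using the case-6 relations among the coefficients, or fall back on an explicit ansatz as the paper does.
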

\begin{proof} System (\ref{sis:aplica}) for case 6 is $\dot{\x}=\F_2+\F_3$ with $\F_2=(x(-x+3y),\ y(3x-y))^T$ and 
$\F_3=(x(-b_{02}x^2-b_{11}xy-b_{02}y^2),\ y(b_{02}x^2+b_{11}xy+b_{02}y^2))^T.$\\
We claim that there exists a formal invariant curve of $\F$ of the form $C=\sum_{j\ge1}C_j$ with 
\begin{equation}\label{expC}C_{2j-1}=A_{2j-1}x^{j-1}y^{j-1}(x-y),\quad C_{2j}=x^{j-1}y^{j-1}(A_{2j}x^2+B_{2j}xy-A_{2j}y^2),\end{equation}
for any $j\ge 1,$ with cofactor $K_1+K_2$ being $K_1=-x-y$ and $K_2=-2b_{02}x^2+2b_{02}y^2.$
We are going to verify that $C$ satisfies $F(C)-KC=0$ degree to degree.\\
For the degree 2, $F_2(C_1)-K_1C_1=0$ arrives to $C_1=x-y$, and for the degree 3, $F_2(C_2)+F_3(C_1)-C_2K_1-C_1K_2=0,$ we get $C_2=b_{02}x^2+\frac{1}{3}(b_{11}-4b_{02})xy+b_{02}y^2.$ Thus, $C_1$ and $C_2$ have the form given by (\ref{expC}). Assume that  (\ref{expC}) is true for $2j_0-1$ and $2j_0$ and we prove that also it holds for $2j_{0}+1$ and $2j_0+2.$\\
Equation $F(C)-KC=0$ for degree $2j_0+2$ is $$F_2(C_{2j_0+1})-C_{2j_0+1}K_1=-F_3(C_{2j_0})+C_{2j_0}K_2=2x^{j_0}y^{j_0}(x-y)(x+y)(A_{2j_0}b_{11}-B_{2j_0}b_{02}).$$ A solution of this equation is $ C_{2j_0+1}=(A_{2j_0}b_{11}-B_{2j_0}b_{02})x^{j_0-1}y^{j_0-1}(x-y),$ i.e. $ C_{2j_0+1}$ is of the form given by (\ref{expC}) with $A_{2j_0+1}=A_{2j_0}b_{11}-B_{2j_0}b_{02}.$\\ 
Analogously, equation $F(C)-KC=0$ for degree $2j_0+3$ is 
$$F_2(C_{2j_0+2})-C_{2j_0+2}K_1=-A_{2j_0+1}x^{j_0}y^{j_0}(x+y)(b_{02}x^2-(b_{11}+4b_{02})xy+b_{02}y^2).$$ A solution of this equation is 
$$ C_{2j_0+2}=A_{2j_0+1}x^{j_0}y^{j_0}\left(-\frac{b_{02}}{2j_0-1}x^2+(\frac{b_{11}}{2j_0+3}+\frac{4(2j_0+1)b_{02}}{(2j_0-1)(2j_0+3)})xy+\frac{b_{02}}{2j_0-1}y^2\right),$$ i.e. $ C_{2j_0+2}$ is of the form given by (\ref{expC}). Therefore, the result is proved. 
\end{proof}

\vspace{0.5truecm}

\noindent {\bf Acknowledgments.} The authors are partially supported by a MINECO/FEDER
grant number MTM2014-56272-C2-2 and by the \emph{Consejer\'{\i}a de Educaci\'on y
Ciencia de la Junta de Andaluc\'{\i}a} (projects P12-FQM-1658, FQM-276).

\end{document}